\newtheorem{theorem}{Theorem}
\newtheorem{lemma}[theorem]{Lemma}
\theoremstyle{definition}
\theoremstyle{remark}
\newtheorem{remark}[theorem]{Remark}
\newcommand{\p}{\mathbb{P}}
\newcommand{\e}{\mathbb{E}}
\newcommand{\reals}{\mathbb{R}}
\newcommand{\ind}{\mathbf{1}}
\newcommand{\me}{\mathrm{e}}
\newcommand{\md}{\mathrm{d}}
\newcommand{\drift}{c}
\newcommand{\WW}{\mathbb{W}}
\newcommand{\qscale}{W^{(q)}}
\newcommand{\pscale}{W^{(p)}}
\newcommand{\wq}{w^{(q)}}
\def\beq{\begin{eqnarray}} \def\eeq{\end{eqnarray}}
\def\al*#1{\begin{align*}#1\end{align*}}
\def\ga*#1{\begin{gather*}#1\end{gather*}}
\def\alat*#1#2{\begin{alignat*}{#1}#2\end{alignat*}}
\def\bea{\begin{eqnarray*}}
\def\eea{\end{eqnarray*}}
\def\ml*#1{\begin{multline*}#1\end{multline*}}
\begin{document}

\title[]{Parisian ruin for a refracted L\'evy process}
%\title[]{Parisian ruin for a L\'evy insurance risk process with an adaptive premium rate}
%\title[]{Exit problems with Parisian delay implementation for refracted L\'evy processes}

\author[Lkabous \& Czarna \& Renaud]{Mohamed Amine Lkabous}
\address{D\'epartement de math\'ematiques, Universit\'e du Qu\'ebec \`a Montr\'eal (UQAM), 201 av.\ Pr\'esident-Kennedy, Montr\'eal (Qu\'ebec) H2X 3Y7, Canada}
\email{lkabous.mohamed\_amine@courrier.uqam.ca}
\author[]{Irmina Czarna}
\address{Department of Mathematics, University of Wroc\l aw, pl. Grunwaldzki 2/4, 50-384 Wroc\l aw, Poland}
\email{czarna@math.uni.wroc.pl}
\author[]{Jean-Fran\c{c}ois Renaud}
\address{D\'epartement de math\'ematiques, Universit\'e du Qu\'ebec \`a Montr\'eal (UQAM), 201 av.\ Pr\'esident-Kennedy, Montr\'eal (Qu\'ebec) H2X 3Y7, Canada}
\email{renaud.jf@uqam.ca}
 
\date{\today}

\begin{abstract}
In this paper, we investigate Parisian ruin for a L\'evy surplus process with an adaptive premium rate, namely a refracted L\'evy process. Our main contribution is a generalization of the result in \cite{loeffenetal2013} for the probability of Parisian ruin of a standard L\'evy insurance risk process. More general Parisian boundary-crossing problems with a deterministic implementation delay are also considered. Despite the more general setup considered here, our main result is as compact and has a similar structure. Examples are provided.
%The formulas involve only the scale functions of the refracted and the underlying spectrally negative L\'evy processes and the distribution of the spectrally negative L\'evy process at a fixed time $r$.
\end{abstract}

\keywords{Parisian ruin, adaptive premium, refracted L\'{e}vy process.}
%\subjclass[2000]{}

\maketitle

%\tableofcontents

%%%%%%%%%%%%%%%%%%%%%%
%%%%%%%%%%%%%%%%%%%%%%
%%%%%%%%%%%%%%%%%%%%%%
\section{Introduction}

In the last few years, the idea of Parisian ruin has attracted a lot of attention. In Parisian-type ruin models, the insurance company is not immediately liquidated when it defaults: a grace period is granted before liquidation. More precisely, Parisian ruin occurs if the time spent below a pre-determined critical level is longer than the implementation delay, also called the \textit{clock}. Originally, two types of Parisian ruin have been considered, one with deterministic delays (see e.g.\ \cites{czarnapalmowski2010,loeffenetal2013,Wong_Cheung2015,landriaultetal2017}) and another one with stochastic delays (\cites{landriaultetal2011,landriaultetal2014,baurdoux_et_al_2015}). These two types of Parisian ruin start a new clock each time the surplus enters the \textit{red zone}, either deterministic or stochastic. A third definition of Parisian ruin, called cumulative Parisian ruin, has been proposed very recently in \cite{guerin_renaud_2015}; in that case, the \textit{race} is between a single deterministic clock and the sum of the excursions below the critical level.
%Moreover, recently a related work was studied in \cite{landriaultetal2017}, we used similar perturbation techniques to obtain the main results of this paper.

In this paper, we are interested in the time of Parisian ruin with a deterministic delay for a refracted L\'evy insurance risk process, a process first studied in \cite{kyprianouloeffen2010}. For a standard L\'evy insurance risk process $X$, the time of Parisian ruin, with delay $r>0$, has been studied in \cite{loeffenetal2013}: it is defined as
$$
\kappa_r = \inf \left\lbrace t > 0 \colon t - g_t > r \right\rbrace ,
$$
where $g_t = \sup \left\lbrace 0 \leq s \leq t \colon X_s \geq 0 \right\rbrace$. Loeffen et al.\ \cite{loeffenetal2013} obtained a very nice and compact expression for the probability of Parisian ruin:
\begin{theorem}%[\cite{loeffenetal2013})]
For $x \in \reals$,
\begin{equation}\label{SNLPPr}
\p_{x}\left(\kappa_{r}<\infty \right) = 1-\left(\e[X_{1}]\right)_{+}\frac{\int^{\infty}_{0}W(x+z)z\p(X_{r}\in \md z)}{\int^{\infty}_{0}z\p(X_{r}\in \md z)} ,
\end{equation}
where $(x)_{+}=\max(x,0)$ and where the function $W$ is the $0$-scale function of $X$ (see its definition in~\eqref{def_scale})
\end{theorem}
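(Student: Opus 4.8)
The plan is to dispose of the easy cases first and then to treat $\e[X_1]>0$ by a renewal argument that reduces the problem to two classical fluctuation identities for spectrally negative L\'evy processes. If $\e[X_1]\le 0$ then $X$ either drifts to $-\infty$ or oscillates, so almost surely it stays below $0$ over arbitrarily long time intervals and $\p_x(\kappa_r<\infty)=1$; this matches the right-hand side of~\eqref{SNLPPr} since $(\e[X_1])_+=0$. So assume henceforth $\e[X_1]>0$, whence $X_t\to+\infty$ a.s.\ and $W(\infty)=1/\e[X_1]$, and write $v(x):=\p_x(\kappa_r=\infty)$.

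For the renewal step, put $\tau_0^-=\inf\{t>0:X_t<0\}$, $\tau_0^+=\inf\{t>0:X_t\ge 0\}$ and $\overline X_t=\sup_{0\le s\le t}X_s$. For $x\ge 0$, split on $\tau_0^-$: on $\{\tau_0^-=\infty\}$ the surplus never enters the red zone and Parisian ruin is impossible, contributing $\p_x(\tau_0^-=\infty)=\e[X_1]\,W(x)$; on $\{\tau_0^-<\infty\}$ an excursion below $0$ begins from $X_{\tau_0^-}<0$, and — since a spectrally negative process creeps upwards — this excursion avoids Parisian ruin exactly when the surplus returns to level $0$ within $r$ units of time, after which the delay clock resets and the process starts afresh from $0$. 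By the strong Markov property this gives
\[
v(x)=\e[X_1]\,W(x)+\e_x\!\left[\ind_{\{\tau_0^-<\infty\}}\,h\big(X_{\tau_0^-}\big)\right]v(0),\qquad x\ge 0,
\]
where $h(y):=\p_y(\tau_0^+\le r)=\p_0(\overline X_r\ge -y)$ for $y<0$; and $v(x)=h(x)\,v(0)$ for $x<0$.

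Two ingredients then close the argument. First, Kendall's identity for spectrally negative L\'evy processes, $t\,\p_0(\tau_z^+\in\md t)\,\md z=z\,\p_0(X_t\in\md z)\,\md t$ — available because $X$ has no positive jumps — lets one rewrite $h(y)=\int_0^r\p_0(\tau_{-y}^+\in\md t)$ directly through the transition law of $X$, and this is what produces the weights $z\,\p(X_r\in\md z)$ in~\eqref{SNLPPr}. Second, the quantity $\e_x[\ind_{\{\tau_0^-<\infty\}}h(X_{\tau_0^-})]$ is evaluated from the $0$-resolvent of $X$ killed on first passage below $0$, whose density is $W(x)-W(x-\theta)$, together with the compensation formula applied to the jump that triggers the passage; the translate $W(x+z)$ — which is just $\p_{x+z}(\tau_0^-=\infty)/\e[X_1]$ — surfaces once this is combined with the first step and the order of integration is interchanged. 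Plugging everything into the renewal identity, solving the resulting scalar relation for $v(0)$, and simplifying by standard scale-function identities (and $W(\infty)=1/\e[X_1]$) collapses the answer to the compact ratio in~\eqref{SNLPPr}; the case $x<0$ then follows from $v(x)=h(x)\,v(0)$.

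The main difficulty is twofold. When $X$ has paths of unbounded variation the renewal equation degenerates at $x=0$, where $\tau_0^-=0$ and the process immediately makes infinitely many microscopic excursions below $0$ — none of which can trigger Parisian ruin — so the naive relation becomes the tautology $v(0)=v(0)$; the decomposition must therefore be justified via It\^o excursion theory at $0$, or by approximating $X$ with spectrally negative processes of bounded variation and passing to the limit, which requires controlling the convergence of the scale functions and of the laws of $X_r$. The second and, I expect, harder obstacle is the final algebraic collapse: one must check that the L\'evy-measure contributions coming from the overshoot of $X$ at $\tau_0^-$ cancel against those generated by Kendall's identity, leaving only $W(x+z)$ and $z\,\p(X_r\in\md z)$. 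That cancellation, rather than any single probabilistic step, is where the substance of the proof lies.
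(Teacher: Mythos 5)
Your proposal follows essentially the same route as the paper's argument (which is the $\delta=0$ specialization of the proof of Theorem~\ref{refractedTh}): a strong Markov decomposition at the first passage below and back above $0$, Kendall's identity to express the return probability $\p_y(\tau_0^+\le r)$ through the weights $z\,\p(X_r\in \md z)$, solving the resulting renewal relation at $x=0$, and an approximation argument for the unbounded-variation case where the decomposition at $x=0$ degenerates. The only caveat is that your evaluation of $\e_x[h(X_{\tau_0^-})\ind_{\{\tau_0^-<\infty\}}]$ via the compensation formula must also account for downward creeping when $\sigma>0$ (the paper instead reads this expectation off the known exit identity~\eqref{eq:exp_scale}), but this does not change the structure of the proof.
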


We want to improve on this result by making the model more general and realistic, as suggested in \cite{renaud2014}, by using a process with adaptive premium for the surplus process. More precisely, when the company is in financial distress, that is when its surplus is below the critical level, the premium is increased; and when its surplus leaves that \textit{red zone} then the premium is brought back to its regular level. Therefore, we will use a refracted L\'evy process as our surplus process. 
%Note that in \cite{renaud2014} an expression for the probability of Parisian ruin with exponentially distributed delays was obtained.%It is assumed that in the meantime the insurance portfolio remains unchanged.  Such a procedure is common, for instance, in the reinsurance industry. 

%In Kyprianou and Loeffen \cite{kyprianou_loeffen2010}, the refracted L\'evy process is introduced. This process is constructed by subtracting off a fixed linear drift from the surplus whenever it crosses a certain threshold. An expression for the probability of Parisian ruin for this process was given by Renaud \cite{renaud2014} when the delay is an exponential random variable independent of the underlying spectrally negative L\'evy process through the relation between the occupation times and the Parisian time.
%
%%%%%%%%%%%%%%%%%%%%%%%
%In   this paper, we provide an expression for the probability of Parisian ruin and  the Laplace transform of the Parisian ruin time for a refracted L\'evy process.
%%%%%%%%%%%%%%%%%%%%%%%

Note that we could also interpret this change in the premium rate as a way to invest (for R\&D, modernization, etc.): if the surplus of the company is in a good financial situation, i.e.\ above the \textit{critical level}, then it invests at rate $\delta$; otherwise it does not. However, for the rest of this paper, we will use the previous interpretation.

In general, fluctuation identities for refracted L\'evy processes can be tedious compared to their classical counterparts because scale functions of two different L\'evy risk processes are involved (see \cite{kyprianouloeffen2010}). Therefore, our main contribution is a surprisingly compact expression for the probability of Parisian ruin for a refracted L\'evy risk process (see Equation~\eqref{refractedPr} below), in the spirit of the one in Equation~\eqref{SNLPPr} for a standard L\'evy risk process. Our formula also provides information on how the \textit{refraction parameter} affects this probability while displaying the impact of the \textit{delay parameter}. Moreover, we analyze more general Parisian boundary-crossing problems for the refracted L\'evy process which have not been studied previously, even for a standard L\'evy risk process. As a consequence, when the \textit{refraction parameter} it set to zero, new identities for the classical L\'evy setup are obtained.

The rest of the paper is organized as follows. In Section 2, we present our model in more details together with some background material on spectrally negative L\'evy processes and scale functions. The main results are presented in Section 3, while Section 4 presents a few examples. Section 5 is devoted to the proofs of the main results as well as (new) technical lemmas. In the Appendix, a few well known properties of scale functions are presented.

%%%%%%%%%%%%%%%%%%%%%%%%%%%%%%%%%%%%%%%%%%%%%%
\section{Our model and background material}

As mentioned in the introduction, we are interested in a surplus process $U$ whose dynamics change by adding a fixed linear drift (premium) whenever it is below the critical level, a region also called the \textit{red zone}. Without loss of generality, we will choose this critical level to be $0$.

In our model, $Y$ is the surplus process during \textit{regular business periods} (above zero), while $X$ is the surplus process, with an additional rate of premium $\delta$, for \textit{critical business periods} (below zero). More precisely, let $Y$ be a Lévy insurance risk process (see the definition below) modelling the dynamic of the surplus $U$ above $0$. Below $0$, our surplus process $U$ evolves as $X=\{X_t=Y_t+\delta t, t\geq 0\}$. Clearly, $X$ is also a Lévy insurance risk process; in fact, $X$ and $Y$ share many properties except for those affected by the value of the linear part of the L\'evy process.

In other words, our surplus process is given by the solution $U=\{U_t, t\geq 0\}$ to the following stochastic differential equation: for $\delta \geq 0$,
\begin{equation}\label{E:dynamic}
\mathrm{d}U_t = \mathrm{d}Y_t + \delta \ind_{\{U_t < 0\}} \mathrm{d}t , \quad t \geq 0 .
\end{equation}
%where $X$ is a Lévy insurance risk process (see the definition below) modelling the dynamic of the surplus $U$ below zero. Above $0$, our surplus process $U$ evolves as $Y=\{Y_t=X_t-\delta t, t\geq 0\}$. Clearly, $Y$ is also a Lévy insurance risk process; in fact, $X$ and $Y$ share many properties except for those affected by the value of the linear part of the Lévy process.

%In this model, the \textit{net profit condition} is given by $\e \left[ U_1 \right] \geq 0$ which is equivalent to $\e \left[ Y_1 \right] \geq 0$ or $\e \left[ X_1 \right] \geq \delta$.
 
%%%%%%%%%%%%%%%%%%%%%%%%%%%%%%%%%
\subsection{Lévy insurance risk processes}

We say that $X=\{X_t,t\geq 0\}$ is a Lévy insurance risk process if it is a spectrally negative L\'evy process (SNLP) on the filtered probability space $(\Omega,\mathcal{F},\{\mathcal{F}_t , t\geq0\}, \mathbb{P})$, that is a process with stationary and independent increments and no positive jumps. To avoid trivialities, we exclude the case where $X$ has monotone paths.

As the L\'{e}vy process $X$ has no positive jumps, its Laplace exponent exists: for all $\lambda, t \geq 0$,
$$
\e \left[ \mathrm{e}^{\lambda X_t} \right] = \mathrm{e}^{t \psi(\lambda)} ,
$$
where
$$
\psi(\lambda) = \gamma \lambda + \frac{1}{2} \sigma^2 \lambda^2 + \int^{\infty}_0 \left( \mathrm{e}^{-\lambda z} - 1 + \lambda z \ind_{(0,1]}(z) \right) \Pi(\mathrm{d}z) ,
$$
for $\gamma \in \reals$ and $\sigma \geq 0$, and where $\Pi$ is a $\sigma$-finite measure on $(0,\infty)$ such that
$$
\int^{\infty}_0 (1 \wedge z^2) \Pi(\mathrm{d}z) < \infty .
$$
This measure $\Pi$ is called the L\'{e}vy measure of $X$. Finally, note that $\e \left[ X_1 \right] = \psi'(0+)$ and thus, in a Lévy insurance risk model, the \textit{net profition condition} is written $\e \left[ X_1 \right] = \psi'(0+) \geq 0$. We will use the standard Markovian notation: the law of $X$ when starting from $X_0 = x$ is denoted by $\p_x$ and the corresponding expectation by $\e_x$. We write $\p$ and $\e$ when $x=0$.%Finally, for a random variable $Z$ and an event $A$, $\e [Z;A] := \e [Z \ind_A]$.

When the surplus process $X$ has paths of bounded variation, that is when $\int^{1}_0 z \Pi(\mathrm{d}z)<\infty$ and $\sigma=0$, we can write
$$
X_t = \drift t - S_t ,
$$
where $\drift := \gamma+\int^{1}_0 z \Pi(\mathrm{d}z) > 0$ is the drift of $X$ and where $S=\{S_t,t\geq 0\}$ is a driftless subordinator (e.g.\ a Gamma process or a compound Poisson process).

We now present the definition of the scale functions $W^{(q)}$ and $Z^{(q)}$ of $X$. First, recall that there exists a function $\Phi \colon [0,\infty) \to [0,\infty)$ defined by $\Phi(q) = \sup \{ \lambda \geq 0 \mid \psi(\lambda) = q\}$ (the right-inverse of $\psi$) such that
$$
\psi ( \Phi(q) ) = q, \quad q \geq 0 .
$$%We have that $\Phi(q)=0$ if and only if $q=0$ and $\psi'(0+)\geq0$.
Now, for $q \geq 0$, the $q$-scale function of the process $X$ is defined as the continuous function on $[0,\infty)$ with Laplace transform
\begin{equation}\label{def_scale}
\int_0^{\infty} \mathrm{e}^{- \lambda y} W^{(q)} (y) \mathrm{d}y = \frac{1}{\psi(\lambda) - q} , \quad \text{for $\lambda > \Phi(q)$.}
\end{equation}
This function is unique, positive and strictly increasing for $x\geq0$ and is further continuous for $q\geq0$. We extend $W^{(q)}$ to the whole real line by setting $W^{(q)}(x)=0$ for $x<0$. We write $W = W^{(0)}$ when $q=0$. We also define
\begin{equation}\label{eq:zqscale}
Z^{(q)}(x) = 1 + q \int_0^x W^{(q)}(y)\mathrm d y, \quad x \in \mathbb R.
\end{equation}

%For background on spectrally negative L\'evy processes we refer to Section 8 of \cite{kyprianou2014}.
%
%We define the Esscher transform via:
%\begin{equation}\label{Girsanov}
%\left. \frac{\md\p_{x}^{c}}{\md\p_{x}}\right| _{\mathcal{F}_{t}}=\frac{\mathcal{E}%
%_{t}\left( c\right) }{\mathcal{E}_{0}\left( c\right) }
%\end{equation}
%for any $c$ for which $\e e^{cX_1}<\infty$, where $\mathcal{E}_{t}\left( c\right) =\exp \{cX_{t}-\psi
%\left( c\right) t\}$ is the exponential martingale under $\p_x$.
%It is easy to check that under this change of
%measure, $X$ remains within the class of spectrally negative processes.

%To the Laplace exponent $\psi$, we associate its right-inverse defined by $\Phi \colon [0,\infty) \to [0,\infty)$ such that $\Phi(q) = \sup \{ \lambda \geq 0 \mid \psi(\lambda) = q\}$ and
%$$
%\psi ( \Phi(q) ) = q, \quad q \geq 0 .
%$$
%We have that $\Phi(q)=0$ if and only if $q=0$ and $\psi'(0+)\geq0$. For $q\geq 0$, there exists a function $W^{(q)}: [0,\infty) \to [0,\infty)$, called {\it $q$-scale function}, that is continuous and increasing with the Laplace transform:
%\begin{equation}\label{eq:defW} 
%\int_0^\infty \mathrm e^{-\theta x} W^{(q)} (x)  \mathrm{d} x = (\psi(\theta) - q)^{-1} , \quad \theta >\Phi(q) .
%\end{equation}
%We denote $W^{(0)}(x)=W(x)$.
%
%Moreover, let for $x \in \mathbb{R}$
%$$
%Z^{(q)}(x)=1+q\int_0^x W^{(q)}(y)\,dy
%$$
%and
%$$
%\overline{W}^{(q)}(x) =\int_0^x W^{(q)}(y) \mathrm{d}y .
%$$

If we define $Y=\{Y_t=X_t-\delta t, t\geq 0\}$, then it is also a Lévy insurance risk process (if it doesn't have monotone paths): its linear part is given by $\gamma-\delta$ but it has the same Gaussian coefficient $\sigma$ and L\'evy measure $\Pi$ as $X$. In fact, $X$ and $Y$ share many properties. Note that we could have specified $Y$ first and then define $X=\{X_t=Y_t+\delta t, t\geq 0\}$ as in the Introduction. The two approaches are equivalent.

The Laplace exponent of $Y$ is given by
$$
\lambda \mapsto \psi(\lambda) - \delta \lambda ,
$$
with right-inverse $\varphi(q) = \sup \{ \lambda \geq 0 \mid \psi(\lambda) - \delta \lambda = q\}$. Then, for each $q \geq 0$, we define its scale functions $\mathbb{W}^{(q)}$ and $\mathbb{Z}^{(q)}$ as in Equations~\eqref{def_scale} and~\eqref{eq:zqscale}:
$$
\int_0^{\infty} \mathrm{e}^{- \lambda y} \mathbb{W}^{(q)} (y) \mathrm{d}y = \frac{1}{\psi(\lambda) - \delta \lambda - q} , \quad \text{for $\lambda > \varphi(q)$}
$$
and
$$
\mathbb{Z}^{(q)}(x) = 1 + q \int_0^x \mathbb{W}^{(q)}(y)\mathrm d y, \quad x \in \mathbb R.
$$

%%%

%%%%%%%%%%%%%%%%%%%%%%%%%%%%%%%%%
\subsection{Refracted Lévy processes}

Recall from Equation~\eqref{E:dynamic}, that our surplus process $U=\{U_t, t\geq 0\}$ is equivalently the solution to
$$
\mathrm{d}U_t = \mathrm{d}Y_t + \delta \ind_{\{U_t < 0\}} \mathrm{d}t , \quad t \geq 0 ,
$$
or
$$
\mathrm{d}U_t = \mathrm{d}X_t - \delta \ind_{\{U_t > 0\}} \mathrm{d}t , \quad t \geq 0 ,
$$
where $\delta \geq 0$ is the refraction parameter. The second stochastic differential equation is the one used in \cite{kyprianouloeffen2010}. It was proved in that article that such a process exists and that it is a skip-free upward strong Markov process.%Above $0$, the surplus process $U$ evolves as $Y=\{Y_t=X_t-\delta t, t\geq 0\}$.

For technical reasons, we need to assume that if $X$ (and also $Y$) has paths of bounded variation then
\begin{equation}\label{E:delta}
0 \leq \delta < \drift = \gamma + \int_{(0,1)} z \Pi(\mathrm{d}z) .
\end{equation}  
Since in this case, $X$ may be written as $X_t = \drift t - S_t$, the condition in Equation~\eqref{E:delta} amounts to making sure $Y$ has a strictly positive linear drift.

In \cite{kyprianouloeffen2010}, many fluctuation identities, including the probability of ruin for $U$, have been derived using \textit{scale functions} for $U$: for $q \geq 0$ and for $x \in \reals$, set
\begin{equation}\label{small w}
w^{\left(q\right)}(x;z) = W^{(q)}(x-z) + \delta \ind_{\{x \geq 0\}} \int^{x}_0 \WW^{(q)}(x-y) W^{(q)\prime}(y-z) \md y .
\end{equation}
Note that when $x<0$, we have
$$
w^{\left(q\right)}(x;z)=W^{\left(q\right)}(x-z) .
$$
For $q=0$, we will write $w^{(0)}(x;z)=w(x;z)$. See \cite{kyprianou2014} for more details.

%In \cite{kyprianouloeffen2010}, many fluctuation identities, including the probability of ruin for $U$, have been derived using \textit{scale functions} for $U$: for $q \geq 0$ and for $x, b \in \reals$, set
%\begin{equation}\label{small w}
%w^{\left(q\right)}(x;z) = W^{\left(q\right)}(x-z) + \delta\ind_{\left\lbrace x\geq b\right\rbrace}\int^{x}_{b}\WW^{\left(q\right)}(x-y)W^{\left(q\right)'}(y-z)\md y .
%\end{equation}
%Note that when $x<b$, we have
%$$
%w^{\left(q\right)}(x;z)=W^{\left(q\right)}(x-z) .
%$$
%For $q=0$, we will write $w^{(0)}(x;z)=w(x;z)$.

%%%%%%%%%%%%%%%%%%%%%%%%%%%%%%%%%
\subsection{Classical ruin and exit problems}

Here is a collection of known fluctuation identities for the spectrally negative L\'evy processes $X$ and $Y$, as well as for the refracted Lévy process $U$. See \cite{kyprianou2014} for more details.

First, for real numbers $a$ and $b$, we define the following first-passage stopping times:
\begin{align*}
\tau_a^- &= \inf\{t>0 \colon X_t<a\} \quad \textrm{and} \quad \tau_b^+ = \inf\{t>0 \colon X_t\geq b\} \\
\nu_a^- &= \inf\{t>0 \colon Y_t<a\} \quad \textrm{and} \quad \nu_b^+ = \inf\{t>0 \colon Y_t\geq b\} \\
\kappa_a^- &= \inf\{t>0 \colon U_t<a\} \quad \textrm{and} \quad \kappa_b^+ = \inf\{t>0 \colon U_t\geq b\} ,
\end{align*}
with the convention $\inf \emptyset=\infty$. For $a \leq 0 \leq b$ and $q \geq 0$, if $a \leq x \leq b$ then we have
%\begin{align*}
%\e_x \left[ \mathrm{e}^{-q \tau_c^+} \ind_{\{\tau_c^+< \tau_a^-\}} \right] &= \frac{W^{(q)}(x-a)}{W^{(q)}(c-a)} \\
%\e_x \left[ \mathrm{e}^{-q \tau_a^-} \ind_{\{\tau_a^-< \tau_c^+ \}} \right] &= Z^{(q)}(x-a) - \frac{Z^{(q)}(c-a)}{W^{(q)}(c-a)} W^{(q)}(x-a) ,
%\end{align*}
%while for $Y$,
%\begin{align*}
%\e_x \left[ \mathrm{e}^{-q \nu_c^+} \ind_{\{\nu_c^+< \nu_a^-\}} \right] &= \frac{\mathbb{W}^{(q)}(x-a)}{\mathbb{W}^{(q)}(c-a)} \\
%\e_x \left[ \mathrm{e}^{-q \nu_a^-} \ind_{\{\nu_a^-< \nu_c^+ \}} \right] &= \mathbb{Z}^{(q)}(x-a) - \frac{\mathbb{Z}^{(q)}(c-a)}{\mathbb{W}^{(q)}(c-a)} \mathbb{W}^{(q)}(x-a) ,
%\end{align*}
%and for $U$,
%\begin{align*}
%\e_x \left[ \mathrm{e}^{-q \kappa_c^+} \ind_{\{\kappa_c^+< \kappa_a^-\}} \right] &= \frac{\wq(x;a)}{\wq(c;a)} \\
%\e_x \left[ \mathrm{e}^{-q \kappa_a^-} \ind_{\{\kappa_a^-< \kappa_c^+ \}} \right] &= ...
%\end{align*}
%
%
%
%$$
%\e_x \left[ \mathrm{e}^{-q \tau_a^-} \ind_{\{\tau_a^-< \tau_b^+ \}} \right] = Z^{(q)}(x-a) - \frac{Z^{(q)}(b-a)}{W^{(q)}(b-a)} W^{(q)}(x-a) ,
%$$
%and
%\begin{align*}
%\e_x \left[ \mathrm{e}^{-q \nu_b^+} \ind_{\{\nu_b^+< \nu_a^-\}} \right] &= \frac{\mathbb{W}^{(q)}(x-a)}{\mathbb{W}^{(q)}(b-a)}, \\
%\e_x \left[ \mathrm{e}^{-q \kappa_b^+} \ind_{\{\kappa_b^+< \kappa_a^-\}} \right] &= \frac{\wq(x;a)}{\wq(b;a)} .
%\end{align*}
$$
\e_x \left[ \mathrm{e}^{-q \kappa_b^+} \ind_{\{\kappa_b^+< \kappa_a^-\}} \right] = \frac{\wq(x;a)}{\wq(b;a)} ,
$$
from which we can deduce that
\begin{equation}\label{eq:first-passage_above}
\e_x \left[ \mathrm{e}^{-q \kappa_b^+} \ind_{\{\kappa_b^+< \infty\}} \right] = \frac{\mathrm{e}^{\Phi(q)x} + \delta \Phi(q) \ind_{\{x \geq 0\}} \int_0^x \mathrm{e}^{\Phi(q)y} \mathbb{W}^{(q)}(x-y) \mathrm{d}y}{\mathrm{e}^{\Phi(q) b} + \delta \Phi(q) \int_0^b \mathrm{e}^{\Phi(q)y} \mathbb{W}^{(q)}(b-y) \mathrm{d}y} .
\end{equation}
See Theorem 5 in \cite{kyprianouloeffen2010}.

Moreover, the \textit{classical} probability of ruin, associated with each three processes, is given by
\begin{equation}\label{E:classicalruinprobaX}\nonumber
\p_x \left( \tau_0^- < \infty \right) = 1 - \left( \e \left[ X_1 \right]\right)_{+} W(x) ,
\end{equation}
for $X$, while for $Y$ and $U$ we have
\begin{equation}\label{E:classicalruinprobaY}
\p_x \left( \nu_0^- < \infty \right) = 1 - \left( \e \left[ X_1\right]-\delta \right)_{+} \WW(x)
\end{equation}
and
\begin{equation}\label{E:classicalruinprobaU}
\p_x \left( \kappa_0^- < \infty \right) = 1 -\frac{\left(\e \left[ X_1 \right] - \delta \right)_{+}}{1-\delta W(0)} w(x;0) .
\end{equation}
Of course, the expressions in Equations~\eqref{E:classicalruinprobaY} and~\eqref{E:classicalruinprobaU} should be equal because $\left\lbrace Y_t , t<\nu^-_{0} \right\rbrace$ and $\left\lbrace U_t , t<\kappa^-_0 \right\rbrace$ have the same distribution with respect to $\p_{x}$ when $x>0$. Using Equation~\eqref{E:convolution} from the Appendix, we can see that this is the case.

Finally, since the Laplace exponent of $Y$ is given by $\lambda \mapsto \psi(\lambda) - \delta \lambda$, then for $x, \theta>0$ we have%from \cite{loeffen_renaud_2010}, 
\begin{equation}\label{laplruinovershoot}
 \mathbb{E}_x \left[ \mathrm{e}^{\theta Y_{\nu_0^-}} \ind_{\{\nu_0^-<\infty\}} \right] = \mathrm{e}^{\theta x} -\left(\psi(\theta)-\delta \theta \right) \mathrm{e}^{\theta x}\int_0^x \mathrm{e}^{-\theta z} \mathbb{W}(z)\mathrm{d}z - \frac{\psi(\theta)-\delta \theta}{\theta} \mathbb{W}(x) .
\end{equation}

We conclude this section with definitions of auxiliary functions. For the sake of compactness, we define for $p,p+q\geq 0$ and $a,x \in \reals$
\begin{align}\label{eq:convsnlp}
\mathcal{W}_{a}^{\left( p,q\right) }\left( x\right) &= W^{\left( p\right)}\left( x\right) +q\int_{a}^{x}W^{\left( p+q\right) }\left( x-y\right)W^{\left( p\right) }\left( y\right) \md y \nonumber\\ &= W^{\left(p+q\right)}\left( x\right)-q\int_{0}^{a}W^{\left( p+q\right) }\left( x-y\right) W^{\left( p\right) }\left( y\right) \md y
\end{align}
and 
\begin{align}\label{eq:convsnlp2}
\mathcal{H}^{\left( p,q\right) }(x) =\me^{\Phi\left( p\right)x}\left(1+q\int_{0}^{x}\me^{-\Phi\left( p\right)z}W^{\left( p+q\right)}(y)\md y\right),
\end{align}
where the second equality in Equation~\eqref{eq:convsnlp} follows from identity~\eqref{eq:sym} in the Appendix. We also define 
\begin{align}\label{eq:convsnlpU}\nonumber
\mathcal{W}_{a,\delta}^{\left( p,q\right)} \left( x\right) &= \WW^{\left(p\right)}(x)-\delta W^{\left(p+q\right)}(0)\WW^{\left(p\right)}\left( x\right) \\
& +\int_{a}^{x}\left(qW^{\left( p+q\right)}\left(x-y\right)-\delta W^{\left( p+q\right)'}\left(x-y\right)\right)\WW^{\left( p\right)}\left(y\right) \md y\nonumber\\
&= W^{\left(p+q\right)}(x)-\int_{0}^{a}\left(qW^{\left( p+q\right)}\left(x-y\right)-\delta W^{\left( p+q\right)'}\left(x-y\right)\right)\WW^{\left( p\right)}\left(y\right) \md y,
\end{align}
and 
\begin{align*}
\mathcal{H}^{\left( p,q\right) }_{\delta}(x) =\me^{\varphi \left( p\right)x}
\left(1+(q-\delta \varphi \left( p\right))\int_{0}^{x}\me^{-\varphi \left( p\right)y}W^{\left( p+q\right)}(y)\md y\right),
\end{align*}
as analogues of~\eqref{eq:convsnlp} and~\eqref{eq:convsnlp2} respectively, where $\mathcal{H}^{\left( p,q\right) }_{0}=\mathcal{H}^{\left( p,q\right) }$ and $\mathcal{W}^{\left( p,q\right) }_{a,0}=\mathcal{W}^{\left( p,q\right)}_{a}$.

The second expression for $\mathcal{W}_{a,\delta}^{\left( p,q\right)}$ in Equation~\eqref{eq:convsnlpU} follows from identity~\eqref{E:convolution} in the Appendix.
%From \cites{renaud2014} we have, for $p,q,x \geq 0,$
%\begin{multline}\label{E:convolution}
%(q-p) \int_0^x \WW^{(p)}(x-y) W^{(q)}(y) \mathrm{d}y \\
%= W^{(q)}(x) - \WW^{(p)}(x) + \delta \left( W^{(q)}(0) \mathbb{W}^{(p)}(x) + \int_a^x \WW^{(p)}(x-y) W^{(q) \prime}(y) \mathrm{d}y \right),
%\end{multline}
%and let
%\begin{align*}
%&\bar{\mathcal{W}_{a}}^{\left( p,q\right) }\left( x\right) =W^{\left(p+q\right)}\left(x\right)-\int_{a}^{x}\left(q
%W^{\left(p+q\right)}\left( x-y\right)-\delta W^{\left(p+q\right)\prime}\left( x-y\right)\right)\mathbb{W}^{\left( p\right) }\left( y\right) \md y,
%\end{align*}
%and 
%\begin{align*}
%\bar{\mathcal{H}}^{\left( p,q\right) }(x)
%=\me^{\varphi \left( p\right)x}
%\left(1+(q-\delta \varphi \left( p\right))\int_{0}^{x}\me^{-\varphi \left( p\right)y}W^{\left( p+q\right)}(y)\md y\right),
%\end{align*}
%as analogues of \eqref{eq:convsnlp} and \eqref{eq:convsnlp2} respectively.

%%%%%%%%%%%%%%%%%%%%%%%%%%%%%%%%%%%%%%%%%
%%%%%%%%%%%%%% MAIN RESULTS %%%%%%%%%%%%%
%%%%%%%%%%%%%%%%%%%%%%%%%%%%%%%%%%%%%%%%%
\section{Main results}

Following the definition for a standard Lévy insurance risk process, we define the time of Parisian ruin, with delay $r>0$, for the refracted Lévy insurance risk process $U$ by
$$
\kappa_r^U = \inf \left\lbrace t > 0 \colon t - g_t^U > r \right\rbrace ,
$$
where $g_t^U = \sup \left\lbrace 0 \leq s \leq t \colon U_s \geq 0 \right\rbrace$. Our main objective is to obtain an expression for the corresponding probability of Parisian ruin that has a similar structure as the one in Equation~\eqref{SNLPPr}.

%%%%%%%%%%%%%%%%%%%%%%%%%%%%%%%%%%%%%%%%%
%%%%%%%%%%%%%%%%%Theorem 1%%%%%%%%%%%%%%%
%%%%%%%%%%%%%%%%%%%%%%%%%%%%%%%%%%%%%%%%%
\begin{theorem}\label{refractedTh}
For $x\in \reals,$
\begin{equation}\label{refractedPr}
\p_{x} \left(\kappa^{U}_{r}<\infty \right) = 1 - \left(\e[X_{1}]-\delta\right)_{+} \frac{\int^{\infty}_{0}w(x;-z)z\p(X_{r}\in \md z)}{\int^{\infty}_{0}z\p(X_{r}\in \md z)-\delta r} .
\end{equation}
\end{theorem}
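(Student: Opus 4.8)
The plan is to reduce $\p_{x}(\kappa^{U}_{r}<\infty)$ to a two‑sided problem by exhausting the real line with the upcrossing levels. Since the path of $U$ is bounded on $[0,\kappa^{U}_{r}]$ whenever $\kappa^{U}_{r}<\infty$, one has $\{\kappa^{U}_{r}<\infty\}=\bigcup_{b>0}\{\kappa^{U}_{r}<\kappa^{+}_{b}\}$ with the events on the right increasing in $b$, and $\p_{x}(\kappa^{U}_{r}=\kappa^{+}_{b})=0$; hence
\[
\p_{x}(\kappa^{U}_{r}<\infty)=\lim_{b\to\infty}\p_{x}(\kappa^{U}_{r}<\kappa^{+}_{b})=1-\lim_{b\to\infty}\p_{x}(\kappa^{+}_{b}<\kappa^{U}_{r}).
\]
If $\e[X_{1}]-\delta\le 0$ the refracted process does not drift to $+\infty$, so it spends arbitrarily long stretches in the red zone and $\p_{x}(\kappa^{U}_{r}<\infty)=1$, which is~\eqref{refractedPr} since then $(\e[X_{1}]-\delta)_{+}=0$; so assume $\e[X_{1}]-\delta>0$. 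The heart of the argument is the claim that, writing $h(x):=\int_{0}^{\infty}w(x;-z)\,z\,\p(X_{r}\in\md z)$,
\[
\p_{x}(\kappa^{+}_{b}<\kappa^{U}_{r})=\frac{h(x)}{h(b)},\qquad x\le b .
\]

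For $x<0$ this is easy: below $0$ the process $U$ coincides with $X=Y+\delta t$, so the initial red‑zone excursion has length $\tau^{+}_{0}$ computed for $X$ started at $x$, whence $\p_{x}(\kappa^{+}_{b}<\kappa^{U}_{r})=\p_{x}(\tau^{+}_{0}<r)\,\p_{0}(\kappa^{+}_{b}<\kappa^{U}_{r})$ with $\p_{x}(\tau^{+}_{0}<r)=\p(\overline{X}_{r}\ge -x)$, $\overline{X}_{r}=\sup_{s\le r}X_{s}$. The first key identity is
\[
\int_{a}^{\infty}W(z-a)\,z\,\p(X_{r}\in\md z)=r\,\p(\overline{X}_{r}\ge a),\qquad a\ge 0,
\]
which, since $w(x;-z)=W(x+z)$ for $x<0$, gives $h(x)=r\,\p_{x}(\tau^{+}_{0}<r)$ there, and at $a=0$ yields the identity $\int_{0}^{\infty}W(z)\,z\,\p(X_{r}\in\md z)=r$ used below. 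I would prove it by Laplace‑transforming in $r$: the left side becomes $\me^{-\Phi(q)a}\bigl(q^{-2}+a\,\Phi'(q)\,q^{-1}\bigr)$ after using the resolvent of $X$ at $0$ (with density $\Phi'(q)\me^{-\Phi(q)\cdot}$ on $(0,\infty)$) together with $\int_{0}^{\infty}\me^{-\lambda u}W(u)\,\md u=1/\psi(\lambda)$ and its $\lambda$‑derivative, and this equals $-\tfrac{\md}{\md q}\bigl(q^{-1}\me^{-\Phi(q)a}\bigr)$, the transform of $r\mapsto r\,\p(\tau^{+}_{a}\le r)$ (equivalently this is Kendall's identity).

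For $x\ge 0$, conditioning on $\kappa^{-}_{0}\wedge\kappa^{+}_{b}$ and using that $U$ agrees with $Y$ on $[0,\kappa^{-}_{0}\wedge\kappa^{+}_{b})$ and with $X$ below $0$, one gets the renewal equation
\[
\p_{x}(\kappa^{+}_{b}<\kappa^{U}_{r})=\frac{\WW(x)}{\WW(b)}+\frac1r\Bigl(H(x)-\frac{\WW(x)}{\WW(b)}H(b)\Bigr)\,\p_{0}(\kappa^{+}_{b}<\kappa^{U}_{r}),
\]
where $\WW(x)/\WW(b)=\p_{x}(\kappa^{+}_{b}<\kappa^{-}_{0})$, the second term records entering the red zone first and returning within time $r$, and
\[
H(x)=\e_{x}\bigl[h(Y_{\nu^{-}_{0}})\ind_{\{\nu^{-}_{0}<\infty\}}\bigr]=\int_{0}^{\infty}z\,\p(X_{r}\in\md z)\,\e_{x}\bigl[W(Y_{\nu^{-}_{0}}+z)\ind_{\{\nu^{-}_{0}<\infty\}}\bigr].
\]
Solving this linear equation for $\p_{0}(\kappa^{+}_{b}<\kappa^{U}_{r})$ and substituting back, the right side collapses to $h(x)/h(b)$ once one knows the algebraic identity $h(x)=\alpha\,\WW(x)+H(x)$ with $\alpha$ a constant, equivalently the first‑passage overshoot identity $\e_{x}[W(Y_{\nu^{-}_{0}}+z)\ind_{\{\nu^{-}_{0}<\infty\}}]=w(x;-z)-\alpha(z)\,\WW(x)$ for $x\ge0$. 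This is where I expect the real work to be: keeping the three scale functions $w$ (of $U$), $\WW$ (of $Y$) and $W$ (of $X$) disentangled. I would derive it either from the Laplace transform of $Y_{\nu^{-}_{0}}$ in~\eqref{laplruinovershoot} together with the identities~\eqref{E:convolution} and~\eqref{eq:sym} of the Appendix, or from the known refracted two‑sided exit $\p_{x}(\kappa^{+}_{b}<\kappa^{-}_{a})=w(x;a)/w(b;a)$ and its limit as $b\to\infty$ combined with the classical first‑passage‑below formula for $X$. The bounded‑ and unbounded‑variation cases must be separated here, since in the latter the conditioning at $\kappa^{-}_{0}$ degenerates at $0$ and one must pass through excursion theory (or an approximation argument).

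Finally I would let $b\to\infty$. From $W(x)\to 1/\e[X_{1}]$ and $\WW(x)\to 1/(\e[X_{1}]-\delta)$, together with dominated convergence in the convolution term of $w(b;-z)$ (the bound $w(x;-z)\le(1+\delta\WW(x))/\e[X_{1}]$ is uniform since $\WW$ is bounded), one gets $w(b;-z)\to(1-\delta W(z))/(\e[X_{1}]-\delta)$, hence
\[
h(\infty):=\lim_{b\to\infty}h(b)=\frac{1}{\e[X_{1}]-\delta}\Bigl(\int_{0}^{\infty}z\,\p(X_{r}\in\md z)-\delta\int_{0}^{\infty}W(z)\,z\,\p(X_{r}\in\md z)\Bigr)=\frac{\int_{0}^{\infty}z\,\p(X_{r}\in\md z)-\delta r}{\e[X_{1}]-\delta},
\]
using $\int_{0}^{\infty}W(z)\,z\,\p(X_{r}\in\md z)=r$ from the second step. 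Therefore $\p_{x}(\kappa^{U}_{r}<\infty)=1-h(x)/h(\infty)$, which is precisely~\eqref{refractedPr} (with $(\e[X_{1}]-\delta)_{+}=\e[X_{1}]-\delta$ in the case at hand). All interchanges of limits and integrals are justified by the uniform bound on $w$ and by $\int_{0}^{\infty}z\,\p(X_{r}\in\md z)=\e[(X_{r})_{+}]<\infty$.
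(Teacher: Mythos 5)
Your skeleton is sound and, once unwound, is essentially the same argument as the paper's: you decompose at the first passage below $0$ (where $U$ agrees with $Y$) and at the return to $0$ (where $U$ agrees with $X$), you use the identity $\p_y(\tau_0^+\le r)=\int_0^\infty W(y+z)\tfrac zr\p(X_r\in\md z)$ for $y<0$ from \cite{loeffenetal2013}, and everything reduces to the overshoot identity $h(x)=H(x)+\alpha\,\WW(x)$. The paper does not pass through the two-sided quantity $\p_x(\kappa_b^+<\kappa_r^U)$ and a limit in $b$; it solves directly for $\p(\kappa_r^U=\infty)$ at $x=0$ using $\p_x(\nu_0^-=\infty)=(\e[X_1]-\delta)_+\WW(x)$ and then substitutes back for general $x$ --- a repackaging rather than a different idea. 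Your algebra for solving the renewal equation, and the terminal limits $w(b;-z)\to(1-\delta W(z))/(\e[X_1]-\delta)$ and $\int_0^\infty W(z)\,z\,\p(X_r\in\md z)=r$, are all correct.

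That said, the two steps you defer are exactly where the paper's work lies. (1) The overshoot identity is the paper's Lemma~\ref{L:main_lemma}, Eq.~\eqref{E:L3}; the paper derives it from the known formula~\eqref{eq:exp_scale} for $\e_x\bigl[\me^{-p\nu_a^-}W^{(q)}(Y_{\nu_a^-})\ind_{\{\nu_a^-<\nu_b^+\}}\bigr]$ (with $p=q$) followed by the limit $\lim_{a\to\infty}w^{(q)}(a;-z)/\WW^{(q)}(a)$, which at $q=0$ yields precisely your $\alpha(z)=1-\delta W(z)$ and hence $\alpha=\int_0^\infty z\,\p(X_r\in\md z)-\delta r$. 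Your second proposed route (the refracted two-sided exit $w(x;-z)/w(b;-z)$ decomposed at $\kappa_0^-$, then $b\to\infty$) does work and is arguably cleaner, but it still requires that same limit computation, so it is not a shortcut around the technical core. (2) The unbounded-variation case is not a routine perturbation: when $\WW(0)=0$ your renewal equation at $x=0$ reads $\p_0=1\cdot\p_0$ and determines nothing. The paper's fix is to introduce the approximating time $\kappa_{r,\epsilon}^U$ (excursions ending at level $\epsilon$ rather than $0$), compute the Laplace transform in $r$ of $\bigl(1-\e_\epsilon\bigl[\p_{Y_{\nu_0^-}}(\kappa_\epsilon^+\le r)\ind_{\{\nu_0^-<\infty\}}\bigr]\bigr)/\WW(\epsilon)$ via~\eqref{eq:first-passage_above} and~\eqref{laplruinovershoot}, show it converges to $1/\Phi(\theta)-\delta/\theta$ as $\epsilon\to0$, and invoke the extended continuity theorem to identify the limit as $\int_0^\infty\tfrac zr\p(X_r\in\md z)-\delta$. ``Excursion theory or an approximation argument'' is the right instinct, but as written your proof is genuinely incomplete at these two points.
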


For classical ruin and Parisian ruin for a standard SNLP, if the \textit{net profit condition} is not verified then (Parisian) ruin occurs almost surely. In the last result, if $\e[X_{1}] \leq \delta$, then the probability of Parisian ruin for $U$ is equal to $1$. This is because asking for $\e[Y_{1}]=\e[X_{1}] - \delta > 0$ is the same as the net profit condition in this model, namely for the surplus process $U$.

Also, it should be clear that, if we set $\delta=0$ in the above result, then we recover Equation~\eqref{SNLPPr}.

\begin{remark}
Using identities from Section~\ref{S:proofs}, we can also re-write the result in Equation~\eqref{refractedPr} as follows:
$$
\p_{x} \left(\kappa^{U}_{r}<\infty \right) = 1 - \left(\e[X_{1}]-\delta\right)_{+} \frac{\int^{\infty}_{0} w(x;-z) z \p(X_{r}\in \md z)}{\int^{\infty}_{0}\left(1-\delta W\left(z\right)\right) z \p(X_{r}\in \md z)} .
$$
\end{remark}

%%%%%%%%%%%%%%%%%%%%%%%%%
\subsection{Other results}

Using some of the results/lemmas in Section~\ref{S:proofs}, it is possible to obtain other fluctuation identities for $U$ involving the time of Parisian ruin.

For example, the discounted probability of $U$ reaching level $a$ before being Parisian ruined and the Laplace transform of the time of Parisian ruin time can also be computed.
\begin{theorem}\label{refracted}%Parisian exit problems for refracted L\'evy process $U$
For any $x \leq a$ and $q\geq 0$, we have
\begin{enumerate}
\item[(i)] \begin{align*}
&\e_{x}\left[\mathrm{e}^{-q(\kappa^{U}_{r}-r)}\ind_{\left\lbrace \kappa^{U}_{r}<\kappa_{a}^{+}
\right\} }\right]\\ & 
\qquad=\mathbb{Z}^{\left(q\right)}\left(x\right)+\int_{0}^{\infty}\left( w^{\left(q\right)}\left(x;-z\right)\e\left[\me^{-q\kappa^{U}_{r}}\ind_{\left\lbrace \kappa^{U}_{r}<\kappa_a^{+}\right\rbrace}\right]
-\mathcal{W}_{x,\delta}^{\left( q,-q\right) }\left(x+z\right)
 \right)\frac{z}{r}\p\left( X_{r}\in \md z\right),
\end{align*}
where 
\begin{align*}
&\e\left[ \me^{-q\kappa _{r}^{U}}\ind_{\left\lbrace \kappa_{r}^{U}<\kappa _{a}^{+} \right\rbrace }\right]=1-\frac{\mathbb{Z}^{\left(
q\right)}\left(a\right)
+\int_{0}^{\infty }\left(w^{\left(q\right) }\left(a;-z\right)-\mathcal{W}_{a,\delta}^{\left(q,-q\right) }\left(a+z\right) \right)\frac{z}{r}\p\left( X_{r}\in \md z\right)
}
{
\int_{0}^{\infty }w^{\left(q\right) }\left(a;-z\right)\frac{z}{r}\p\left( X_{r}\in \md z\right)
}
\\
& \qquad=\frac{\int_{0}^{\infty }\mathcal{W}_{a,\delta}^{\left(q,-q\right) }\left(a+z\right) \frac{z}{r}\p\left( X_{r}\in \md z\right)}{
\int_{0}^{\infty }w^{\left(q\right) }\left(a;-z\right)\frac{z}{r}\p\left( X_{r}\in \md z\right)
}
 -\frac{\mathbb{Z}^{\left(
q\right)}\left(a\right) 
}
{
\int_{0}^{\infty }w^{\left(q\right) }\left(a;-z\right)\frac{z}{r}\p\left( X_{r}\in \md z\right)
}
\end{align*}

\item[(ii)]
\begin{align*}
&\e_{x}\left[\mathrm{e}^{-q(\kappa^{U}_{r}-r)}\ind_{\left\lbrace \kappa^{U}_{r}<\infty
\right\} }\right]\\ & 
\qquad=\mathbb{Z}^{\left(q\right)}\left(x\right)+\int_{0}^{\infty}\left( w^{\left(q\right)}\left(x;-z\right)\e\left[\me^{-q\kappa^{U}_{r}}\ind_{\left\lbrace \kappa^{U}_{r}<\infty\right\rbrace}\right]
-\mathcal{W}_{x,\delta}^{\left( q,-q\right) }\left(x+z\right)
 \right)\frac{z}{r}\p\left( X_{r}\in \md z\right),
\end{align*}
where 
$$
\e \left[\mathrm{e}^{-q(\kappa^{U}_{r}-r)}\ind_{\left\lbrace \kappa^{U}_{r}<\infty\right\} }\right] = \frac{\int_{0}^{\infty}\mathcal{H}^{\left(q,-q\right)}_{\delta}(z)\frac{z}{r}\p\left( X_{r}\in \md z\right)-\frac{q}{\varphi \left( q\right)}-\delta}{\int_{0}^{\infty }\mathcal{H}^{\left(q,0\right)}_{\delta}(z)\frac{z}{r}\p\left( X_{r}\in \md z\right)-\delta \me^{qr}} ,
$$
\item[(iii)]
\begin{align*}
&\e_{x}\left[\mathrm{e}^{-q\kappa^{+}_{a}}\ind_{\left\lbrace \kappa_a^+<\kappa^{U}_{r}
\right\} }\right]=\frac{\int_{0}^{\infty} w^{\left(q\right)}(x;-z) \frac{z}{r}\p
\left( X_{r}\in \md z\right)}{\int_{0}^{\infty} w^{\left(q\right)}(a;-z) \frac{z}{r}\p
\left( X_{r}\in \md z\right)}.
\end{align*}
\end{enumerate}
\end{theorem}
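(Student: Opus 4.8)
\emph{Proof strategy.}
The plan is to prove (i), deduce (ii) by letting $a\to\infty$, and obtain (iii) by a parallel but lighter argument. The backbone is the pathwise description of $U$: above $0$ it evolves as $Y$ and below $0$ as $X$; since $X$ is skip-free upward every excursion of $U$ below $0$ terminates continuously at $0$; and $\kappa^U_r$ is exactly the instant at which the first such excursion of duration $>r$ reaches age $r$. I would iterate the strong Markov property at the successive passage times $\kappa^-_0$ (while $U\ge0$) and $\tau^+_0$ of the embedded copies of $X$ (while $U<0$), splitting each excursion below $0$ according to whether it is short (duration $\le r$: the game restarts from $0$) or long (duration $>r$: Parisian ruin at age $r$). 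For $x<0$ the first excursion starts immediately, giving $f(x):=\e_x[\me^{-q(\kappa^U_r-r)}\ind_{\{\kappa^U_r<\kappa^+_a\}}]=\mathbb{P}_x(\tau^+_0>r)+\e_x[\me^{-q\tau^+_0}\ind_{\{\tau^+_0\le r\}}]\,\me^{qr}\,\e[\me^{-q\kappa^U_r}\ind_{\{\kappa^U_r<\kappa^+_a\}}]$; for $0\le x\le a$ one gets the same formula with $\mathbb{P}_x(\tau^+_0>r)$ and $\e_x[\me^{-q\tau^+_0}\ind_{\{\tau^+_0\le r\}}]$ replaced by the downward-exit functionals $\e_x[\me^{-q\kappa^-_0}\ind_{\{\kappa^-_0<\kappa^+_a\}}\,g(U_{\kappa^-_0})]$ of $Y$, with $g(y)=\mathbb{P}_y(\tau^+_0>r)$ and $g(y)=\e_y[\me^{-q\tau^+_0}\ind_{\{\tau^+_0\le r\}}]$ respectively. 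Only one unknown — the restart constant $\me^{qr}\,\e[\me^{-q\kappa^U_r}\ind_{\{\kappa^U_r<\kappa^+_a\}}]=f(0)$ — appears.

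The first key input turns the $X$-excursion functionals into integrals against $\tfrac{z}{r}\mathbb{P}(X_r\in\md z)$. Kendall's identity for the spectrally negative process $X$, $\mathbb{P}(\tau^+_z\in\md t)=\tfrac{z}{t}\mathbb{P}(X_t\in\md z)$, gives $\int_0^\infty\me^{-st}\tfrac1t\mathbb{P}(X_t\in\md z)\,\md t=\tfrac1z\me^{-\Phi(s)z}\,\md z$; combining with $\int_0^\infty\me^{-\lambda y}W^{(q)}(y)\,\md y=(\psi(\lambda)-q)^{-1}$ yields
\begin{equation*}
\int_0^\infty W^{(q)}(z)\,\tfrac{z}{t}\,\mathbb{P}(X_t\in\md z)=\me^{qt},\qquad t\ge0,
\end{equation*}
and then, using the skip-free-upward convolution $\tau^+_{b+u}\overset{d}{=}\tau^+_b+\tau^{+\prime}_u$,
\begin{equation*}
\int_0^\infty W^{(q)}(y+z)\,\tfrac{z}{t}\,\mathbb{P}(X_t\in\md z)=\me^{qt}\,\e_y[\me^{-q\tau^+_0}\ind_{\{\tau^+_0\le t\}}],\qquad y<0 .
\end{equation*}
Since $w^{(q)}(y;-z)=W^{(q)}(y+z)$ for $y<0$, the second identity replaces $\e_{U_{\kappa^-_0}}[\me^{-q\tau^+_0}\ind_{\{\tau^+_0\le r\}}]$, and its $q=0$ case gives $\mathbb{P}_{U_{\kappa^-_0}}(\tau^+_0>r)=1-\int_0^\infty W(U_{\kappa^-_0}+z)\tfrac{z}{r}\mathbb{P}(X_r\in\md z)$; this collapses the double $(U_{\kappa^-_0},z)$-integral coming from the renewal equation into a single integral against $\tfrac{z}{r}\mathbb{P}(X_r\in\md z)$.

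The second input fixes the $x$-dependence. The family $\{\me^{-qt}w^{(q)}(U_t;-z):t<\kappa^-_{-z}\}$ is a martingale — a consequence of the identity $\e_x[\me^{-q\kappa^+_b}\ind_{\{\kappa^+_b<\kappa^-_{-z}\}}]=w^{(q)}(x;-z)/w^{(q)}(b;-z)$ recalled in Section~2 and of $W^{(q)}\equiv0$ on $(-\infty,0)$. Since $U$ reaches $a$ continuously and $U_{\kappa^-_0}<0$ on the relevant event, optional stopping at $\kappa^-_0\wedge\kappa^+_a$ (and $\e_x[\me^{-q\kappa^+_a}\ind_{\{\kappa^+_a<\kappa^-_0\}}]=\mathbb{W}^{(q)}(x)/\mathbb{W}^{(q)}(a)$, the two-sided exit for $Y$) gives, for $0\le x\le a$,
\begin{equation*}
w^{(q)}(x;-z)=w^{(q)}(a;-z)\,\tfrac{\mathbb{W}^{(q)}(x)}{\mathbb{W}^{(q)}(a)}+\e_x\bigl[\me^{-q\kappa^-_0}\ind_{\{\kappa^-_0<\kappa^+_a\}}W^{(q)}(U_{\kappa^-_0}+z)\bigr].
\end{equation*}
A companion identity $\mathcal{W}^{(q,-q)}_{x,\delta}(x+z)=\mathcal{W}^{(q,-q)}_{a,\delta}(a+z)\,\tfrac{\mathbb{W}^{(q)}(x)}{\mathbb{W}^{(q)}(a)}+\e_x[\me^{-q\kappa^-_0}\ind_{\{\kappa^-_0<\kappa^+_a\}}W(U_{\kappa^-_0}+z)]$ is obtained by evaluating the refracted downward-exit functional $\e_x[\me^{-q\kappa^-_0}\ind_{\{\kappa^-_0<\kappa^+_a\}}W(U_{\kappa^-_0}+z)]$ (through the $q$-resolvent of $Y$ killed on exiting $[0,a)$, the L\'evy measure, and, if $\sigma>0$, the creeping term) and rearranging with the convolution identities~\eqref{E:convolution} and~\eqref{eq:sym}; this is one of the technical lemmas of Section~\ref{S:proofs}. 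Plugging the two identities, together with $\e_x[\me^{-q\kappa^-_0}\ind_{\{\kappa^-_0<\kappa^+_a\}}]=\mathbb{Z}^{(q)}(x)-\mathbb{Z}^{(q)}(a)\,\mathbb{W}^{(q)}(x)/\mathbb{W}^{(q)}(a)$, back into the renewal equation, the $x$-dependence organises into $\mathbb{Z}^{(q)}(x)+\int_0^\infty(w^{(q)}(x;-z)\,\e[\me^{-q\kappa^U_r}\ind_{\{\kappa^U_r<\kappa^+_a\}}]-\mathcal{W}^{(q,-q)}_{x,\delta}(x+z))\tfrac{z}{r}\mathbb{P}(X_r\in\md z)$ plus a leftover multiple of $\mathbb{W}^{(q)}(x)/\mathbb{W}^{(q)}(a)$; evaluating at $x=a$, where $f(a)=0$ because $\kappa^+_a=0<\kappa^U_r$, forces that leftover to vanish, which both determines $\e[\me^{-q\kappa^U_r}\ind_{\{\kappa^U_r<\kappa^+_a\}}]$ — yielding the two boxed formulas — and establishes the stated expression for $f$; this proves (i).

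Part (ii) follows on letting $a\to\infty$: the large-argument asymptotics of $W^{(q)}$ and $\mathbb{W}^{(q)}$ (and $\varphi(q)\ge\Phi(q)$) turn $w^{(q)}(a;-z)$, $\mathcal{W}^{(q,-q)}_{a,\delta}(a+z)$ and $\mathbb{Z}^{(q)}(a)$ into the corresponding $\mathcal{H}^{(q,\cdot)}_{\delta}$-limits and the constants $q/\varphi(q)$ and $\delta\me^{qr}$, with a dominated-convergence argument for passing the limit under the integral. Part (iii) uses the same decomposition on $\e_x[\me^{-q\kappa^+_a}\ind_{\{\kappa^+_a<\kappa^U_r\}}]$: reaching $a$ ends the game, so the renewal equation closes without a surviving constant, its solution is proportional to $x\mapsto\int_0^\infty w^{(q)}(x;-z)\tfrac{z}{r}\mathbb{P}(X_r\in\md z)$, and normalising by the value at $x=a$ (where the expectation equals $1$) gives the ratio, using $\int_0^\infty w^{(q)}(0;-z)\tfrac{z}{r}\mathbb{P}(X_r\in\md z)=\int_0^\infty W^{(q)}(z)\tfrac{z}{r}\mathbb{P}(X_r\in\md z)=\me^{qr}$. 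The main obstacle is the $\mathcal{W}^{(q,-q)}_{x,\delta}$-identity in the second input: it needs an explicit evaluation of the refracted downward-exit functional above and a nontrivial manipulation of convolutions of $W^{(p)}$ and $\mathbb{W}^{(p)}$ to recognise the outcome as precisely $\mathcal{W}^{(q,-q)}_{x,\delta}(x+z)$, and likewise to identify the $a\to\infty$ limits as $\mathcal{H}^{(q,\cdot)}_{\delta}$. Keeping careful track of the constants $q/\varphi(q)$ and $\delta\me^{qr}$ in (ii), justifying the interchange of limit and integral there, and checking consistency at $x=0$ and in the bounded-variation regime~\eqref{E:delta} (where $0$ may be irregular downward for $U$) are the remaining, essentially routine, technical points.
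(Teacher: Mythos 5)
Your architecture---the renewal equation obtained by splitting at $\kappa_0^-$ and $\tau_0^+$, the Kendall-type identities converting the excursion functionals of $X$ into integrals against $\tfrac{z}{r}\p(X_r\in\md z)$, and the downward-exit functionals of $Y$ expressed through $w^{(q)}$ and $\mathcal{W}_{x,\delta}^{(q,-q)}$---matches the paper's proof of (i) and (ii); the paper obtains both exit identities from the single quoted formula~\eqref{eq:exp_scale} of Loeffen rather than from your martingale/resolvent derivations, but that is a matter of taste, and the $a\to\infty$ passage for (ii) is as you describe. For (iii) the paper takes a different and lighter route: it writes $\p_x(\kappa_a^+<\kappa_r^U)=\p_x(\kappa_r^U=\infty)/\p_a(\kappa_r^U=\infty)$ and introduces the discounting via an Esscher change of measure with parameter $\Phi(q)$, reading the answer off Theorem~\ref{refractedTh}, rather than re-running the renewal equation.

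There is, however, a genuine gap at the crux of (i): your mechanism for determining the restart constant $C:=\e[\me^{-q\kappa_r^U}\ind_{\{\kappa_r^U<\kappa_a^+\}}]$ is circular. Writing the renewal equation as $f(x)=\me^{qr}H(x)-\tfrac{\WW^{(q)}(x)}{\WW^{(q)}(a)}\me^{qr}H(a)$, where $\me^{qr}H(x)$ is the claimed expression (containing $C$ linearly), your ``leftover'' is by construction $-\me^{qr}H(a)=-[\text{claimed}](a)$; since each building block $\e_x[\me^{-q\nu_0^-}(\cdots)\ind_{\{\nu_0^-<\nu_a^+\}}]$ vanishes identically at $x=a$, the relation $f(a)=0$ holds for \emph{every} value of $C$ and evaluating at $x=a$ yields $0=0$. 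The same circularity affects your proposed normalisation of (iii) at $x=a$, since the renewal solution there is $1$ for any value of the unknown $h(0)$. The paper instead evaluates at $x=0$: there $\mathbb{Z}^{(q)}(0)=1$, $w^{(q)}(0;-z)=W^{(q)}(z)$, $\mathcal{W}_{0,\delta}^{(q,-q)}(z)=W(z)$, and identities~\eqref{eq:lemmapart3} and~\eqref{eq:lemmapart4} make $[\text{claimed}](0)=\me^{qr}C=f(0)$ cancel on both sides, leaving $\WW^{(q)}(0)\,H(a)=0$; in the bounded-variation case $\WW^{(q)}(0)>0$ forces $H(a)=0$, which is exactly the displayed formula for $C$. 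In the unbounded-variation case $\WW^{(q)}(0)=0$, so even $x=0$ is vacuous and the paper resorts to the $\epsilon$-approximation (restart level $\epsilon>0$) from the proof of Theorem~\ref{refractedTh}---a step you dismiss as routine but which is where the constant is actually pinned down in that regime. Without one of these two devices your argument leaves the central constants of (i) and (iii) undetermined.
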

\begin{remark}
If we set $\delta=0$, we obtain the same quantities by replacing $\varphi$, $w^{(q)}$, $\mathcal{H}^{\left(q,-q\right)}_{\delta}$ and $\mathcal{W}^{\left(q,-q\right)}_{\delta}$  by $\Phi$, $W^{(q)}$, $\mathcal{H}^{\left(q,-q\right)}$ and $\mathcal{W}^{\left(q,-q\right)}$ respectively.
\end{remark}
\section{Examples}

We now present four models in which we can compute the probability of Parisian ruin given in Theorem~\ref{refractedTh}. The task amounts to finding processes $X$ and $Y$ for which both the distribution and the scale function are known. First, we will look at the two classical models: the Cramér-Lundberg model with exponential claims and the Brownian risk model. Then, we will move toward more sophisticated surplus processes, namely a stable risk process and a jump-diffusion risk process with phase-type claims.

%One of our goals here is to analyze the impact of the premium change on the value of the probability of Parisian ruin. Therefore, we will use the main result of \cite{loeffenetal2013} as the starting point of our numerical sensitivity analysis.

%%%%%%%%%%%%%%%%%%%%%%%%%%%%%%%%%%%%%%%%%%%%%%%
\subsection{Cramér-Lundberg processes with exponential claims}

When $X$ and $Y$ are a Cramér-Lundberg risk processes with exponentially distributed claims, then they are given by
$$
X_t - X_0 = \drift t - \sum_{i=1}^{N_t} C_i \quad \text{and} \quad Y_t - Y_0 = (\drift-\delta) t - \sum_{i=1}^{N_t} C_i ,
$$
where $N=\{N_t, t\geq 0\}$ is a Poisson process with intensity $\eta>0$, and where $\{C_1, C_2, \dots\}$ are independent and exponentially distributed random variables with parameter $\alpha$. The Poisson process and the random variables are mutually independent. In this case, the Laplace exponent of $X$ is given by
$$
\psi(\lambda) = \drift \lambda + \eta \left( \frac{\alpha}{\lambda+\alpha} - 1 \right) , \quad \text{for $\lambda>-\alpha$}
$$
and the net profit condition is given by $\e \left[ Y_1 \right] = \drift - \delta - \eta/\alpha \geq 0$. Then, for $x\geq 0$, we have
\begin{align*}
W(x) &= \frac{1}{c-\eta/\alpha} \left( 1- \frac{\eta}{c\alpha}\mathrm{e}^{(\frac{\eta}{c}-\alpha)x} \right), \\
\mathbb{W}(x) &= \frac{1}{c-\delta-\eta/\alpha} \left( 1- \frac{\eta}{(c-\delta)\alpha}\mathrm{e}^{\left(\frac{\eta}{c-\delta}-\alpha\right)x} \right), \\
%W^\prime(x)= \frac{\eta}{c^2} \mathrm{e}^{(\frac{\eta}{c}-\alpha)x}
w\left(x;-z\right) &=\frac{1}{c-\eta/\alpha} \left( 1- \frac{\eta}{c\alpha}\mathrm{e}^{(\frac{\eta}{c}-\alpha)(x+z)} \right) + \frac{K(x,\delta,\alpha,\eta,c)}{(c-\delta-\eta/\alpha)c}\mathrm{e}^{(\frac{\eta}{c}-\alpha)z} ,
\end{align*}
where
$$
K(x,\delta,\alpha,\eta,c) :=\delta \eta\left(\frac{1}{\eta-c \alpha}\left(\mathrm{e}^{(\frac{\eta}{c}-\alpha)x} -1\right)-\frac{1}{\delta \alpha}\left(1-\mathrm{e}^{\frac{-\eta \delta}{c(c-\delta)}x} \right)\mathrm{e}^{(\frac{\eta}{c-\delta}-\alpha)x} \right) .
$$

As noted in \cite{loeffenetal2013}, we have% we get (one can observe that a sum of i.i.d exponential random variables equals a gamma random variable and because $C_i$ and $N_t$ are independent)
\begin{multline*}
\mathbb{P} \left( \sum_{i=1}^{N_r}C_i\in\mathrm{d}y \right) = \sum_{k=0}^\infty \mathbb P \left( \sum_{i=0}^k C_i\in\mathrm{d}y \right) \mathbb P(N_r=k)  \\
= \mathrm{e}^{-\eta r} \left( \delta_0(\mathrm{d}y)  + \mathrm{e}^{-\alpha y}\sum_{m=0}^\infty \frac{ (\alpha \eta r)^{m+1}}{m!(m+1)!}  y^{m} \mathrm{d}y  \right) ,
\end{multline*}
where $\delta_0(\mathrm{d}y)$ is a Dirac mass at $0$, and consequently
\begin{align*}
\int_0^\infty   z\mathbb{P}(X_r\in\mathrm{d}z) &= \int_0^{cr} z\mathrm{e}^{-\eta r} \left( \delta_0(cr-\mathrm{d}z)  + \mathrm{e}^{-\alpha (cr-z)}\sum_{m=0}^\infty \frac{ (\alpha \eta r)^{m+1}}{m!(m+1)!}  (cr-z)^{m} \mathrm{d}z  \right) \\
&= \mathrm{e}^{-\eta r} \left(  cr + \sum_{m=0}^\infty \frac{ (\eta r)^{m+1}}{m!(m+1)!} \left[ cr\Gamma(m+1,cr\alpha) -\frac1\alpha \Gamma(m+2,cr\alpha)  \right]  \right) ,
\end{align*}
where $\Gamma(a,x)=\int_0^x \mathrm{e}^{-t}t^{a-1}\mathrm{d}t$ is the incomplete gamma function, and
\begin{equation*}
 \frac{\eta}{c\alpha}\int_0^\infty \mathrm{e}^{(\frac{\eta}{c}-\alpha)z}z\mathbb{P}(X_r \in \mathrm{d}z) = \int_0^\infty z \mathbb{P}(X_r\in\mathrm{d}z)- (c-\eta/\alpha )r .
\end{equation*}

Putting all the pieces together with the main result of Theorem~\ref{refractedTh}, we obtain the following expression for the probability of Parisian ruin:
\begin{multline*}
\mathbb{P}_x(\kappa^{U}_r<\infty) =1-\left(1-\frac{\delta}{c-\eta/\alpha}\right)\left(1-\mathrm{e}^{(\frac{\eta}{c}-\alpha)x}\right)\\
 -\left(1-\frac{\delta}{c-\eta/\alpha}\right)\frac{\delta r-\mathrm{e}^{(\frac{\eta}{c}-\alpha)x}\left( \delta r -(c-\eta/\alpha)r\right)}{\mathrm{e}^{-\eta r}\left(cr + \sum_{m=0}^\infty \frac{ (\eta r)^{m+1}}{m!(m+1)!} \left[ cr\Gamma(m+1,cr\alpha) -\frac1\alpha \Gamma(m+2,cr\alpha)  \right]\right)-\delta r} \\
- \frac{ \alpha}{\eta}K(x,\delta,\alpha,\eta,c)\left(1+\frac{\delta r-(c-\eta/\alpha)r}{\mathrm{e}^{-\eta r}\left(cr + \sum_{m=0}^\infty \frac{ (\eta r)^{m+1}}{m!(m+1)!} \left[ cr\Gamma(m+1,cr\alpha) -\frac1\alpha \Gamma(m+2,cr\alpha)  \right]\right)-\delta r}\right) .
\end{multline*}

The following two tables provide a sensitivity analysis for the probability of Parisian ruin in a refracted Cram\'er-Lundberg model (with exponential claims) with respect to the refraction parameter $\delta$ and the Parisian delay parameter $r$. The value of the initial level $U_0=x$ is also varying.

Note that, in this example, we used the notation $\drift$ for the linear part of $X$ (below $0$) and $\drift-\delta$ for the linear part of $Y$ (above $0$). In other words, during \textit{regular business periods}, the drift is given by $\drift-\delta$. Consequently, in Table~\ref{table:table1}, we have fixed the value of $\drift-\delta$ (above $0$) and looked at the effect of a change in value of $\delta$, the refraction parameter, on the probability of Parisian ruin. Note that, when $\delta$ increases, then the value of $\drift$ (below $0$) also increases to keep $c-\delta$ constant. As expected, the larger the value of $\delta$, the smaller the probability of Parisian ruin.

In Table~\ref{table:table2}, we have fixed all parameters except for the Parisian delay parameter $r$. As expected, the larger the value of the delay $r$, i.e.\ the larger the \textit{grace period}, the smaller the probability of Parisian ruin.

\begin{table}[h]
\caption{Impact of the refraction parameter $\delta$ on the probability of Parisian ruin in a refracted Cram\'er-Lundberg model}
\centering
\begin{tabular}{|c|c|c|c|c|}
\hline
$x$ & $\delta =0$ & $\delta =1$ & $\delta =3$ & $\delta =5$ \\ \hline\hline
$1$ & $2.872324151\times 10^{-1}$ & $1.850876547\times 10^{-1}$ & $%
5.573334777\times 10^{-2}$ & $1.226635655\times 10^{-2}$ \\ \hline
$5$ & $1.474700390\times 10^{-1}$ & $9.50271705\times 10^{-2}$ & $%
2.86144548\times 10^{-2}$ & $6.2977571\times 10^{-3}$ \\ \hline
$10$ & $6.40902148\times 10^{-2}$ & $4.12986379\times 10^{-2}$ & $%
1.24357907\times 10^{-2}$ & $2.7369940\times 10^{-3}$ \\ \hline
$20$ & $1.210507796\times 10^{-2}$ & $7.8003051\times 10^{-3}$ & $%
2.3488176\times 10^{-3}$ & $5.169513\times 10^{-4}$ \\ \hline
$30$ & $2.286353896\times 10^{-3}$ & $1.4732872\times 10^{-3}$ & $%
4.436344\times 10^{-4}$ & $9.76391\times 10^{-6}$ \\ \hline
\end{tabular}
\begin{minipage}{12cm}
Parameters: $r=2$, $\drift-\delta=6$ (drift above $0$), $\eta=5$, $\alpha=1$.
\end{minipage}
\label{table:table1}
\end{table}

\begin{table}[h]
\caption{Impact of the delay parameter $r$ on the probability of Parisian ruin in a refracted Cram\'er-Lundberg model}
\centering
\begin{tabular}{|c|c|c|c|c|}
\hline
$x$ & $r=0$ & $r=1$ & $r=2$ & $r=3$ \\ \hline\hline
$1$ & $7.054014374\times 10^{-1}$ & $1.727546072\times 10^{-1}$ & $%
5.573334777\times 10^{-2}$ & $2.064556230\times 10^{-2}$ \\ \hline
$5$ & $3.621651737\times 10^{-1}$ & $8.86951728\times 10^{-2}$ & $%
2.86144548\times 10^{-2}$ & $1.05997853\times 10^{-2}$ \\ \hline
$10$ & $1.573963357\times 10^{-1}$ & $3.85467632\times 10^{-2}$ & $%
1.24357907\times 10^{-2}$ & $4.6066476\times 10^{-3}$ \\ \hline
$20$ & $2.972832780\times 10^{-2}$ & $7.2805432\times 10^{-3}$ & $%
2.3488176\times 10^{-3}$ & $8.700832\times 10^{-4}$ \\ \hline
$30$ & $5.614955832\times 10^{-3}$ & $1.3751168\times 10^{-3}$ & $%
4.436344\times 10^{-4}$ & $1.643375\times 10^{-4}$ \\ \hline
\end{tabular}
\begin{minipage}{12cm}
Parameters: $\delta=3$, $\drift=6$ (drift below $0$), $\drift-\delta=6$ (drift above 0), $\eta=5$, $\alpha=1$.
\end{minipage}
\label{table:table2}
\end{table}

%%%%%%%%%%%%%%%%%%%%%%%%%%%%%%
\subsection{Brownian risk processes}

Now, if $X$ and $Y$ are Brownian risk processes, i.e.\ if
$$
X_t - X_0 = \drift t +\sigma B_t \quad \text{and} \quad Y_t -Y_0 = (\drift-\delta) t + \sigma B_t ,
$$
where $B=\{B_t, t\geq 0\}$ is a standard Brownian motion. In this case, the Laplace exponent of $X$ is given by
$$
\psi(\lambda) = \drift \lambda + \frac12\sigma^2 \lambda^2
$$
and the net profit condition is given by $\e \left[ Y_1 \right] = \drift - \delta \geq 0$. Then, for $x\geq 0$, we have
\begin{align*}
W(x) &= \frac{1}{c} \left( 1-\mathrm{e}^{-2\frac{c}{\sigma^2} x} \right), \\
\mathbb{W}(x) &= \frac{1}{c-\delta} \left( 1-\mathrm{e}^{-2\frac{c-\delta}{\sigma^2} x}  \right), \\
w\left(x;-z\right) &= \frac{1}{c} \left( 1-\mathrm{e}^{-2\frac{c}{\sigma^2} (x+z)}  \right) + M(x,\delta,\sigma,c)\mathrm{e}^{-2\frac{c}{\sigma^2} z} ,
\end{align*}
where
$$
M(x,\delta,\sigma,c) :=\frac{\delta}{c-\delta}\left(\frac{1}{c}\left(1-\mathrm{e}^{-2\frac{c}{\sigma^2}x}\right)-\frac{1}{\delta}\left(\mathrm{e}^{-2\frac{c-\delta}{\sigma^2}x}-\mathrm{e}^{-2\frac{c}{\sigma^2}x}\right)\right) .
$$

Again, as noted in \cite{loeffenetal2013}, we have
\begin{equation*}
 \int_0^\infty \mathrm{e}^{-\frac{2c}{\sigma^2} z}  z\mathbb{P}(X_r\in\mathrm{d}z) = \int_0^\infty   z\mathbb{P}(X_r\in\mathrm{d}z)- cr
\end{equation*}
and consequently
$$
\int_0^\infty z \mathbb{P}(X_r\in\mathrm{d}z) = \frac{1}{\sqrt{2\pi \sigma^2 r}}\int_0^\infty z\mathrm{e}^{-\frac{(z-c r)^2}{2\sigma^2 r}}\mathrm{d}z = \frac{\sigma\sqrt{r}}{\sqrt{2\pi}}\mathrm{e}^{-\frac{c^2r}{2\sigma^2}} +  cr \mathcal N \left( \frac{c\sqrt r}{\sigma}  \right) .
$$

Putting all the pieces together with the main result of Theorem~\ref{refractedTh}, we obtain the following expression for the probability of Parisian ruin:
\begin{multline*}
\mathbb{P}_x (\kappa^{U}_r<\infty) \\
= 1-\left(\frac{c-\delta}{c}\right)\frac{ \left(\frac{\sigma\sqrt{r}}{\sqrt{2\pi}}\mathrm{e}^{-\frac{c^2r}{2\sigma^2}} +  c r\mathcal N \left( \frac{c\sqrt r}{\sigma} \right)\right)\left(1- \mathrm{e}^{-\frac{2c}{\sigma^2} x} + cM(x,\delta,\sigma,c)\right)}{ \frac{\sigma\sqrt{r}}{\sqrt{2\pi}}\mathrm{e}^{-\frac{c^2r}{2\sigma^2}} +  c r\mathcal N \left( \frac{c\sqrt r}{\sigma}\right)-\delta r}\\
+\left(c-\delta \right)\frac{r\left(\mathrm{e}^{-\frac{2c}{\sigma^2} x}-cM(x,\delta,\sigma,c)\right)}{ \frac{\sigma\sqrt{r}}{\sqrt{2\pi}}\mathrm{e}^{-\frac{c^2r}{2\sigma^2}} +  c r\mathcal N \left( \frac{c\sqrt r}{\sigma}\right)-\delta r}.
\end{multline*}

The following two tables provide a sensitivity analysis for the probability of Parisian ruin in a refracted Brownian risk model with respect to the refraction parameter $\delta$ and the Parisian delay parameter $r$. The value of the initial level $U_0=x$ is also varying. Again in this example we used the notation $\drift$ for the linear part of $X$ (below $0$) and $\drift-\delta$ for the linear part of $Y$ (above $0$).

In Table~\ref{table:refraction}, we have fixed the value of $\drift-\delta$ (above $0$) and looked at the effect of a change in value of $\delta$, the refraction parameter, on the probability of Parisian ruin. As expected, the larger the value of $\delta$, the smaller the probability of Parisian ruin. In Table~\ref{table:delay}, we have fixed all parameters except for the Parisian delay parameter $r$. As expected, the larger the value of the delay $r$, i.e.\ the larger the \textit{grace period}, the smaller the probability of Parisian ruin.

\begin{table}[h]
\begin{center}
\caption{Impact of the refraction parameter $\delta$ on the probability of Parisian ruin in a refracted Brownian risk model}
\begin{tabular}{|c|c|c|c|c|c|}
\hline
$x$ & $\delta =0$ & $\delta =1$ & $\delta =3$ & $\delta =4$ & $\delta =5$ \\ 
\hline\hline
$1$ & $1.756316\times 10^{-2}$ & $4.058863\times 10^{-2}$ & $2.040134\times
10^{-2}$ & $1.393016\times 10^{-2}$ & $9.279776\times 10^{-3}$ \\ \hline
$5$ & $4.629599\times 10^{-3}$ & $1.069916\times 10^{-3}$ & $5.377735\times
10^{-2}$ & $3.671950\times 10^{-3}$ & $2.446123\times 10^{-3}$ \\ \hline
$10$ & $8.744183\times 10^{-4}$ & $2.020791\times 10^{-3}$ & $1.015725\times
10^{-3}$ & $6.935426\times 10^{-4}$ & $4.620132\times 10^{-4}$ \\ \hline
$20$ & $3.119399\times 10^{-5}$ & $7.209243\times 10^{-5}$ & $3.623682\times
10^{-4}$ & $2.474236\times 10^{-5}$ & $1.648221\times 10^{-5}$ \\ \hline
$30$ & $1.112814\times 10^{-6}$ & $2.574575\times 10^{-6}$ & $1.294587\times
10^{-6}$ & $8.835856\times 10^{-7}$ & $5.883359\times 10^{-7}$ \\ \hline
\end{tabular}
\label{table:refraction}
\begin{minipage}{12cm}
Parameters: $r=2$, $\drift-\delta=6$ (drift above $0$), $\sigma=6$
\end{minipage}
\end{center}
\end{table}

\begin{table}[h]
\begin{center}
\caption{Impact of the delay parameter $r$ on the probability of Parisian ruin in a refracted Brownian risk model}
\begin{tabular}{|c|c|c|c|c|c|}
\hline
$x$ & $r=0$ & $r=1$ & $r=2$ & $r=4$ & $r=6$ \\ \hline\hline
$1$ & $8.3650684\times 10^{-1}$ & $8.89538704\times 10^{-2}$ & $%
2.908344\times 10^{-2}$ & $5.066851\times 10^{-3}$ & $1.146373\times 10^{-3}$
\\ \hline
$5$ & $3.6513221\times 10^{-1}$ & $3.65700339\times 10^{-2}$ & $%
1.195692\times 10^{-2}$ & $2.083045\times 10^{-3}$ & $4.712679\times 10^{-4}$
\\ \hline
$10$ & $1.2674282\times 10^{-1}$ & $1.20385972\times 10^{-2}$ & $%
3.936133\times 10^{-3}$ & $6.857238\times 10^{-4}$ & $1.551377\times 10^{-4}$
\\ \hline
$20$ & $1.908693\times 10^{-2}$ & $1.3045990\times 10^{-3}$ & $%
4.265510\times 10^{-4}$ & $7.431054\times 10^{-5}$ & $1.681198~\times 10^{-5}
$ \\ \hline
$30$ & $3.41422\times 10^{-3}$ & $1.413768\times 10^{-4}$ & $4.622456\times
10^{-5}$ & $8.052897\times 10^{-6}$ & $1.821894\times 10^{-6}$ \\ \hline
\end{tabular}
\label{table:delay}
\begin{minipage}{12cm}
Parameters: $\delta=3$, $\drift=6$ (drift below $0$), $\drift-\delta=3$ (drift above $0$), $\sigma=6$
\end{minipage}
\end{center}
\end{table}

%%%%%%%%%%%%%%%%%%%%%%%%%%%%%%%%%%%%%%%%%%%%%%%%
\subsection{Jump-diffusion risk processes with phase-type claims}

More generally, if we add a Brownian component and if we let the claim distribution be more general, then we consider a L\'evy jump-diffusion risk process with phase-type claims:
$$
X_t - X_0 = \drift t + \sigma B_t - \sum_{i=1}^{N_t} C_i \quad \text{and} \quad Y_t - Y_0 = (\drift-\delta) t + \sigma B_t - \sum_{i=1}^{N_t} C_i ,
$$
where $\sigma \geq 0$, $B=\{B_t, t\geq 0\}$ is a standard Brownian motion, $N=\{N_t, t\geq 0\}$ is a Poisson process with intensity $\eta>0$, and where $\{C_1, C_2, \dots\}$ are independent random variables with common phase-type distribution with with the minimal representation $(m, \mathbf{T}, \boldsymbol\alpha)$, i.e.\ its cumulative distribution function (cdf) is given by $F(x)=1-\boldsymbol\alpha \mathrm{e}^{\mathbf{T}x} \mathbf{1}$ and $\mathbf{T}$ is an $m\times m$ matrix of a continuous-time killed Markov chain, its initial distribution is given by a simplex
$\boldsymbol\alpha =[\alpha_1,...,\alpha_m]$ and $\mathbf{1}$ denotes a column vector of ones. All of the aforementioned objects are mutually independent (for details we refer to \cite{egamiyamazaki2014}).

The Laplace exponent of $X$ is then clearly given by
\begin{equation}\label{psi_Jump-diffusion}
\psi(\lambda) = c \lambda + \frac{\sigma^2 \lambda^2}{2}+\eta \left(\boldsymbol\alpha(\lambda \mathbf{I}-\mathbf{T})^{-1}\mathbf{t}-1\right) ,
\end{equation}
where $\mathbf{t}=-\mathbf{T}\mathbf{1}$.

Let us denote by $\rho_j$ and $\zeta_i$ the roots with negative real parts of equations $\lambda \mapsto \psi(\lambda)=0$ and $\lambda \mapsto \psi(\lambda)-\delta \lambda=0$, respectively. Since we assume the net profit condition $\e[X_1]>\delta$, from Proposition 5.4 in \cite{kuznetsovetal2012}, we have that the $\rho_j$'s and the $\zeta_i$'s are distinct roots. Then, from Proposition 2.1 in \cite{egamiyamazaki2014} and Proposition 5.4 in \cite{kuznetsovetal2012}, we can obtain
\begin{align*}
W(x) &= \frac{1}{\psi'(0)}+\sum_{j \in \mathcal{I}_{\rho}}A_j \mathrm{e}^{\rho_j x}, \quad W'(x)=\sum_{j \in \mathcal{I}_{\rho}} \rho_j A_j \mathrm{e}^{\rho_j x}, \\
\mathbb{W}(x) &= \frac{1}{\psi'(0)-\delta}+\sum_{i \in \mathcal{I}_{\zeta}}B_i \mathrm{e}^{\zeta_i x},\\
w\left(x;-z\right) &= \frac{1}{\psi'(0)}+\sum_{j \in \mathcal{I}_{\rho}}A_j \mathrm{e}^{\rho_j (x+z)} \\
& \quad + \frac{1}{\psi'(0)-\delta} \sum_{j \in \mathcal{I}_{\rho}}\rho_j A_j \left(\mathrm{e}^{\rho_j x}-1\right) \mathrm{e}^{\rho_j z}+\sum_{j \in \mathcal{I}_{\rho}}\sum_{i \in \mathcal{I}_{\zeta}}\frac{\mathrm{e}^{\rho_j x}-\mathrm{e}^{\zeta_i x}}{\rho_j-\zeta_i}A_j B_i \mathrm{e}^{\rho_j z},
\end{align*}
where $A_j =\frac{1}{\psi'(\rho_j)}$ and $B_i =\frac{1}{\psi'(\zeta_i)-\delta}$, and where $\mathcal{I}_{\rho}$ and $\mathcal{I}_{\zeta}$ are the sets of indices corresponding to the $\rho_j$'s and the $\zeta_i$'s, respectively. Moreover, one can observe that the Laplace exponent in~\eqref{psi_Jump-diffusion} and $\psi(\lambda)-\delta \lambda$ are a ratio of two polynomials of degree $m+2$ and $m$ respectively. This is true of course if $\sigma>0$ and $c>0$.  On the other hand if $\sigma =0$ and $c>0$ we obtain ratio of two polynomials of degree $m+1 $ and $m$ respectively. Thus if we take $\psi(\lambda)=0$ and $\psi(\lambda)-\delta \lambda=0$ we will have $m+2$ or $m+1$ roots depending on whether $\sigma >0$ or $\sigma=0$. From \cite{kuznetsovetal2012}[Prop. 5.4 (ii)] we know that there are $m+1$ (or $m$) roots with negative real part. Hence $\mathbf{card} \left( \mathcal{I}_{\zeta} \right) = \mathbf{card} \left( \mathcal{I}_{\rho} \right) = m+1$ (or $\mathbf{card} \left( \mathcal{I}_{\zeta} \right) = \mathbf{card} \left(\mathcal{I}_{\rho} \right) = m$ if $\sigma=0$). 

Moreover,
\begin{eqnarray}\label{phase density}\nonumber
\mathbb{P}(X_{r}\in\mathrm{d}z) = \mathrm{e}^{-\eta r} \sum_{k=0}^\infty \frac{(\eta r)^k}{k!} \int_0^\infty F^{*k}(\mathrm{d}y)
\mathcal{N}\left((\mathrm{d}z+y-c r)\sigma\sqrt{r}\right) ,\label{density}
\end{eqnarray}
where $\mathcal{N}$ is the cdf of a standard normal random variable, $F^{*k}$ is the $k$-th convolution of $F$ and for $k=0$ we understand $F^{*0}(\mathrm{d}y)=\delta_0(\mathrm{d}y)$ to be a Dirac mass at $0$.

Putting all the pieces together, we obtain an expression for the probability of Parisian ruin.
\subsection{Stable risk processes}

Now, if $X$ and $Y$ are $3/2$-stable risk processes, i.e.\ if
$$
X_t - X_0 = \drift t + Z_t \quad \text{and} \quad Y_t -Y_0 = (\drift-\delta) t + Z_t ,
$$
where $Z=\{Z_t, t\geq 0\}$ is a spectrally negative $\alpha$-stable process with $\alpha=3/2$. In this case, the Laplace exponent of $X$ is given by $\psi(\lambda) = \drift \lambda + \lambda^{3/2}$. Then, for $x\geq 0$, we have
\begin{align*}
W(x) &= \frac{1-E_{1/2}(-c\sqrt{x})}{c}, \\
\mathbb{W}(x) &= \frac{1-E_{1/2}(-(c-\delta)\sqrt{x})}{c-\delta}, \\
%W'(x)=\frac{1}{\pi \sqrt{x}}-c\cdot E_{1/2}(-c\sqrt{x}),
w\left(x;-z\right) &= \frac1c\left[1-E_{1/2} \left( -c\sqrt{x+z} \right) \right] \\
& \quad +\int_0^x \frac{1}{c-\delta}\left[1-E_{1/2} \left( -(c-\delta)\sqrt{x-y} \right) \right]\left(\frac{1}{\pi \sqrt{x}}-c\cdot E_{1/2} \left( -c\sqrt{y+z} \right) \right) \mathrm{d}y ,
\end{align*}
where $E_{1/2}$ is the Mittag-Leffler function of order $1/2$.%, i.e.\ $E_{1/2}(-\sqrt{z})=e^{z}\left[1+\textrm{erf}(-\sqrt{z})\right]$ and erf is error function i.e. $\textrm{erf}(z)=\frac{2}{\pi}\int_0^{z} e^{-t^2} dt$ (for details see \cite{habuoldatal}). 

Again, as noted in \cite{loeffenetal2013}, we have
\begin{equation}\label{stable density}\nonumber
\mathbb P(Z_r\in\mathrm{d}y)= \mathbb P(r^{2/3}Z_1\in\mathrm{d}y) =
\begin{cases}
\sqrt{\frac3{\pi}} r^{2/3} y^{-1} \mathrm{e}^{-u/2}W_{1/2,1/6}\left(u\right)\mathrm{d}y & y>0, \\
 -\frac{1}{2\sqrt{3\pi}}r^{2/3} y^{-1} \mathrm{e}^{u/2}W_{-1/2,1/6}\left(u\right)\mathrm{d}y & y<0,
\end{cases}
\end{equation}
where $u=\frac{4}{27}r^{9/2}|y|^3$ and $W_{\kappa,\mu}$ is Whittaker's W-function (not to be confused with the $0$-scale function of $X$).

Putting all the pieces together with the main result of Theorem~\ref{refractedTh}, we obtain the probability of Parisian ruin.

%%%%%%%%%%%%%%%%%%%%%%%%%%%%%%%%%%%%%%%
%%%%%%%%%%%%%%%%Proofs%%%%%%%%%%%%%%%%%%
%%%%%%%%%%%%%%%%%%%%%%%%%%%%%%%%%%%%%%%
\section{Proofs and more}\label{S:proofs}

The proofs of our main results are based on technical but important lemmas (provided in the next section), as well as more standard probabilistic decompositions.
%
%Moreover, we would like to point out that the main idea of the proofs is taken from \cite{loeffenetal2013} and involves the use of a Kendall's identity: (\cite[Corollary VII.3]{bertoin1996})
%\begin{equation}\label{eq:Kendall}
% r \p(\tau_z^+ \in \mathrm{d}r)\mathrm{d}z= z \p(X_r \in \mathrm{d}z)\mathrm{d}r.
%\end{equation}

%%%%%%%%%%%%%%%%%%%%%%%%%%%%
\subsection{Intermediate results}

The next lemma is lifted from \cite{loeffenetal2013}:
\begin{lemma}\label{laplace}
For $\theta>q>0$ and $y\geq0$,
\begin{equation}\label{eq:lemmapart1}
\int_0^\infty \mathrm{e}^{-\theta r} \int_{y}^\infty \frac{z}{r} \mathbb P(X_r\in\mathrm{d}z)\mathrm{d}r = \frac{1}{\Phi(\theta)}\mathrm{e}^{-\Phi(\theta)y} ,
\end{equation}
and
\begin{equation}\label{eq:lemmapart2}
\int_0^\infty \mathrm{e}^{-\theta r} \int^{\infty}_{0}W^{(q)}(z-y)\frac{z}{r}\p(X_{r}\in \md z) \md r = \frac{\mathrm{e}^{-\Phi(\theta)y }}{\theta-q} .
\end{equation}
\end{lemma}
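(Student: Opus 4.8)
The plan is to derive both identities from \emph{Kendall's identity} for the first-passage time of the spectrally negative L\'evy process $X$, together with the defining Laplace transform~\eqref{def_scale} of the scale functions. Let $\tau_z^+=\inf\{t>0\colon X_t\geq z\}$ as before. Since $X$ has no positive jumps and non-monotone paths it creeps upwards, so $X_{\tau_z^+}=z$ on $\{\tau_z^+<\infty\}$; Kendall's identity (see e.g.\ \cite{kyprianou2014}, or the proof of the analogous lemma in \cite{loeffenetal2013}, from which the present lemma is lifted) asserts the equality of $\sigma$-finite measures on $(0,\infty)^2$
$$
\frac{z}{r}\,\p(X_r\in\md z)\,\md r \;=\; \p(\tau_z^+\in\md r)\,\md z .
$$
This, together with the classical transform $\e\!\left[\me^{-\theta\tau_z^+}\ind_{\{\tau_z^+<\infty\}}\right]=\me^{-\Phi(\theta)z}$ valid for all $z,\theta\geq 0$, are the only external ingredients.

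For~\eqref{eq:lemmapart1} I would substitute the displayed identity into the left-hand side, integrate in $r$ first (legitimate by Tonelli, the integrand being non-negative), and obtain
$$
\int_0^\infty \me^{-\theta r}\!\int_y^\infty \frac{z}{r}\,\p(X_r\in\md z)\,\md r
= \int_y^\infty \e\!\left[\me^{-\theta\tau_z^+}\ind_{\{\tau_z^+<\infty\}}\right]\md z
= \int_y^\infty \me^{-\Phi(\theta)z}\,\md z = \frac{\me^{-\Phi(\theta)y}}{\Phi(\theta)},
$$
using $\Phi(\theta)>0$, which holds as soon as $\theta>0$ (only $\theta>0$ is in fact needed for this part).

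For~\eqref{eq:lemmapart2}, the same substitution and Tonelli turn the left-hand side into $\int_0^\infty W^{(q)}(z-y)\me^{-\Phi(\theta)z}\,\md z$. The change of variable $u=z-y$, together with $W^{(q)}\equiv 0$ on $(-\infty,0)$, rewrites this as $\me^{-\Phi(\theta)y}\int_0^\infty \me^{-\Phi(\theta)u}W^{(q)}(u)\,\md u$. Because $\theta>q$ and $\Phi$ is strictly increasing on $(0,\infty)$ we have $\Phi(\theta)>\Phi(q)$, so~\eqref{def_scale} applies at $\lambda=\Phi(\theta)$ and gives $\int_0^\infty \me^{-\Phi(\theta)u}W^{(q)}(u)\,\md u=(\psi(\Phi(\theta))-q)^{-1}=(\theta-q)^{-1}$, since $\psi(\Phi(\theta))=\theta$; this is the asserted identity.

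The only genuinely non-trivial ingredient is Kendall's identity; everything else is Tonelli, a change of variables, and the definition of the scale function, so I expect that to be where all the work sits. One must invoke (or reprove, as in \cite{loeffenetal2013}) the link $z\,\p(X_r\in\md z)\,\md r = r\,\p(\tau_z^+\in\md r)\,\md z$ between the law of $X_r$ on $(0,\infty)$ and the law of $\tau_z^+$, which rests on upward creeping of spectrally negative L\'evy processes. Since the lemma is quoted verbatim from \cite{loeffenetal2013}, citing that reference (or Kyprianou's monograph for Kendall's identity) closes the argument.
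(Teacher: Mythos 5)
Your proof is correct: Kendall's identity plus Tonelli, the transform $\e[\me^{-\theta\tau_z^+}]=\me^{-\Phi(\theta)z}$, and the defining Laplace transform \eqref{def_scale} evaluated at $\lambda=\Phi(\theta)>\Phi(q)$ give exactly \eqref{eq:lemmapart1} and \eqref{eq:lemmapart2}. The paper itself offers no proof (it quotes the lemma from \cite{loeffenetal2013}), and your argument is precisely the one in that reference, so there is nothing to add.
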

%\begin{proof}
%Identity~\eqref{eq:lemmapart1} follows directly from \cite[Lemma 2]{loeffenetal2013}. To obtain~\eqref{eq:lemmapart2}, we use Kendall's identity, Tonelli's theorem, equation~\eqref{passagabove} and~\eqref{eq:defW}, and then we get
%\begin{align*}
%&\int_0^\infty\mathrm{e}^{-\theta r} \int_0^\infty \mathrm{e}^{-qr} W^{(q)}(z-y) \frac{z}{r}\mathbb{P}(X_r\in\mathrm{d}z) \md r=\int_0^\infty W^{(q)}(z-y) \int_0^\infty \mathrm{e}^{-\left(\theta+q\right)r}\mathbb{P}(\tau_z^+\in\mathrm{d}r) \mathrm{d}z\\ & 
%\qquad =\int_0^\infty \mathrm{e}^{-\Phi(\theta+q)z} W^{(q)}(z-y) \mathrm{d}z=\frac{1}{\psi(\Phi(\theta+q))-q} \mathrm{e}^{-\Phi(\theta+q)y}=\frac{1}{\theta} \mathrm{e}^{-\Phi(\theta+q)y}.
%\end{align*}
%\end{proof}

From this first lemma, we can deduce the following two useful identities:
\begin{equation}
\int_{0}^{\infty }W^{(q)}(z)\frac{z}{r}\mathbb{P}(X_{r}\in \mathrm{d}z)=\mathrm{e}^{qr},  \label{eq:lemmapart3}
\end{equation}
and 
\begin{equation}\label{eq:lemmapart4}
\int_0^\infty \mathrm{e}^{-\theta r}W(z-y)\frac{z}{r}\mathbb{P}(X_{r}\in \mathrm{d}z) = \frac{1}{\theta} \mathrm{e}^{-\Phi(\theta)y }, \quad y\geq0 .
\end{equation}
%by letting $y\rightarrow0$ and $q=0$ in         \eqref{eq:lemmapart2} respectively.

We can also extract from \cite{loeffenetal2013} the following identity: for $x<0$,
\begin{equation}\label{eq:lemmapart5}
\p_x \left(\tau_0^+\leq r\right) = \int^{\infty}_{0}W(x+z)\frac{z}{r}\p(X_{r}\in\md z) .
\end{equation}
This identity will be generalized in Equation~\eqref{E:L3}.

For the proof of our main lemma, which is Lemma~\ref{L:main_lemma} below, we will need the following result taken from \cite{loeffen2014}.
\begin{lemma}
For all $p,q \geq 0$ and $a\leq x\leq b$,
\begin{multline}\label{eq:exp_scale}
\e_x \left[ \mathrm{e}^{-p\nu_a^-} W^{(q)}(Y_{\nu_a^-})\ind_{\{\nu_a^-<\nu_b^+\}}\right] = W^{(q)}(x)-\int_0^{x-a} \left((q-p) W^{(q)}(x-z)-\delta W^{(q)'}(x-z)\right)\WW^{(p)}(z)\md z \\
-\frac{\WW^{(p)}(x-a)}{\WW^{(p)}(b-a)} \left(W^{(q)}(b)-\int_0^{b-a} \left((q-p) W^{(q)}(b-z)-\delta W^{(q)'}(b-z)\right)\WW^{(p)}(z)\md z \right) .
\end{multline}
\end{lemma}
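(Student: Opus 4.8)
We sketch a proof of \eqref{eq:exp_scale} (which is quoted from \cite{loeffen2014}) for completeness; the plan is to combine a Dynkin-type argument for $\mathrm{e}^{-pt}W^{(q)}(Y_t)$ with the $p$-resolvent of $Y$ killed on first exit from the compact interval $[a,b]$ and the classical upward two-sided exit identity for $Y$. Write $\mathcal{L}_X$ for the (extended) generator of $X$; since $Y_t = X_t - \delta t$ we have $\mathcal{L}_Y = \mathcal{L}_X - \delta\,\partial_x$. Because $W^{(q)}$ vanishes on $(-\infty,0)$, the identity $(\mathcal{L}_X - q)W^{(q)} = 0$ --- classical on $(0,\infty)$ --- extends (in the a.e.\ sense) to all of $\reals\setminus\{0\}$, and therefore
\[
(\mathcal{L}_Y - p)W^{(q)}(y) = (q-p)W^{(q)}(y) - \delta W^{(q)'}(y), \qquad y\in\reals\setminus\{0\}.
\]

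Set $T := \nu_a^- \wedge \nu_b^+$. On $[0,T]$ the path of $Y$ stays in $[a,b]$, so $\e_x[T]<\infty$ and all quantities below are bounded; moreover $Y$ is spectrally negative, hence creeps upward, so $Y_{\nu_b^+} = b$ on $\{\nu_b^+ < \nu_a^-\}$. Applying Dynkin's formula to $\mathrm{e}^{-pt}W^{(q)}(Y_{t\wedge T})$, letting $t\to\infty$ (dominated convergence), and using the displayed generator identity gives
\[
W^{(q)}(x) = \e_x\!\left[\mathrm{e}^{-pT}W^{(q)}(Y_T)\right] - \e_x\!\left[\int_0^T \mathrm{e}^{-ps}\bigl((q-p)W^{(q)}(Y_s) - \delta W^{(q)'}(Y_s)\bigr)\,\md s\right].
\]
In the first term, splitting over $\{\nu_a^- < \nu_b^+\}$ and its complement and using the two-sided exit identity $\e_x[\mathrm{e}^{-p\nu_b^+}\ind_{\{\nu_b^+<\nu_a^-\}}] = \WW^{(p)}(x-a)/\WW^{(p)}(b-a)$, the contribution from the complement equals $W^{(q)}(b)\,\WW^{(p)}(x-a)/\WW^{(p)}(b-a)$.

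Finally, the density of the $p$-resolvent measure of $Y$ killed on first exit from $[a,b]$ is $u^{(p)}(x,y) = \WW^{(p)}(x-a)\WW^{(p)}(b-y)/\WW^{(p)}(b-a) - \WW^{(p)}(x-y)$ for $a\le x,y\le b$, so the occupation integral in the display equals $\int_a^b u^{(p)}(x,y)\bigl((q-p)W^{(q)}(y)-\delta W^{(q)'}(y)\bigr)\,\md y$. Substituting $z = x-y$ in the part carrying $\WW^{(p)}(x-y)$ and $z = b-y$ in the part carrying $\WW^{(p)}(b-y)$, and using $\WW^{(p)}\equiv 0$ on $(-\infty,0)$ to truncate the integration ranges to $(0,x-a)$ and $(0,b-a)$, one recovers exactly the two integrals in \eqref{eq:exp_scale}; collecting terms yields the claim. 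The only genuinely delicate point is the use of Dynkin's formula, since $W^{(q)}$ need not be twice differentiable (it is merely $C^1$ when $\sigma>0$, and can be less regular in the bounded-variation case); this is handled in the standard way, either by smoothing $W^{(q)}$ and passing to the limit, or via the Meyer--It\^o formula together with the fact that $W^{(q)}$ lies in the domain of the extended generator of $X$ on $(0,\infty)$. Equivalently, one may avoid It\^o entirely: a direct computation using $(\mathcal{L}_X - q)W^{(q)} = 0$ shows that the right-hand side of \eqref{eq:exp_scale}, viewed as a function of $x$, solves $(\mathcal{L}_Y - p)F = 0$ on $(a,b)$ with $F \equiv W^{(q)}$ on $(-\infty,a]$ and $F(b)=0$, and this boundary value problem has the left-hand side as its unique bounded solution.
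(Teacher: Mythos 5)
The paper itself does not prove this lemma --- it is quoted from Loeffen (2014) without proof --- so there is no in-paper argument to compare against; I am assessing your proposal on its own terms. Your overall architecture is the standard and correct one: Dynkin's formula for $\mathrm{e}^{-pt}W^{(q)}(Y_t)$ up to $T=\nu_a^-\wedge\nu_b^+$, the two-sided exit identity $\e_x[\mathrm{e}^{-p\nu_b^+}\ind_{\{\nu_b^+<\nu_a^-\}}]=\WW^{(p)}(x-a)/\WW^{(p)}(b-a)$ together with upward creeping, and the $p$-resolvent density of $Y$ killed on exiting $[a,b]$. The final bookkeeping (the substitutions $z=x-y$ and $z=b-y$, truncated using $\WW^{(p)}\equiv 0$ on $(-\infty,0)$) does reproduce the right-hand side of \eqref{eq:exp_scale} exactly.

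There is, however, one genuine gap, and it sits exactly where you wave it away: the claim that $(\mathcal{L}_X-q)W^{(q)}=0$ ``extends (in the a.e.\ sense) to all of $\reals\setminus\{0\}$, and therefore'' Dynkin applies. An almost-everywhere generator identity is not enough here, because the distributional second derivative of $W^{(q)}$ has an atom at $0$ (for instance $W^{(q)\prime}(0+)=2/\sigma^2$ while $W^{(q)\prime}(0-)=0$ when $\sigma>0$), so the Meyer--It\^o decomposition of $\mathrm{e}^{-pt}W^{(q)}(Y_t)$ carries a local-time-at-zero term that your displayed Dynkin identity silently drops. This is not a removable technicality: when $a<0<b$ the formula you are proving is actually false. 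Take $\delta=0$, $p=q$, $X$ a Brownian motion with drift, $a=-1$, $x=0$, $b=1$: the left-hand side vanishes, since the continuous path exits at $Y_{\nu_{-1}^-}=-1$ and $W^{(q)}(-1)=0$, whereas the right-hand side equals $W^{(q)}(0)-\bigl(W^{(q)}(1)\bigr)^2/W^{(q)}(2)=-\bigl(W^{(q)}(1)\bigr)^2/W^{(q)}(2)<0$. Your argument (and the lemma) is correct precisely when $0\notin(a,b)$, i.e.\ $a\geq 0$: then $Y$ accumulates no local time at the kink before $T$ and the generator identity is only needed on $(a,b)\subset(0,\infty)$. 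That is the only case the paper uses --- the lemma is invoked after shifting by $z\geq 0$, with lower barrier $z$ --- so you should add the hypothesis $a\geq 0$ (or, in the shifted formulation, require that the kink of the propagated scale function lies at or below the lower barrier) and carry out the Dynkin/Meyer--It\^o step under that restriction. The same caveat applies to your alternative ``boundary value problem'' sketch, which likewise needs $(\mathcal{L}_Y-p)F=0$ to hold throughout $(a,b)$. With that restriction in place, the rest of your computation is correct.
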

Note that another expression for~\eqref{eq:exp_scale} can be found in \cite[Lemma1]{renaud2014}.

The following three identities are new and crucial for the proofs of our main results.
\begin{lemma}\label{L:main_lemma}
For $x\in \reals$, $q \geq 0$ and $a\geq 0$, we have
\begin{multline}\label{E:L3}
\e_{x}\left[\p_{Y_{\nu^{-}_{0}}}\left(\tau^{+}_{0}\leq r\right)\ind_{\left\lbrace \nu^{-}_{0}<\infty\right\rbrace}\right]=\int^{\infty}_{0}\left(w(x;-z)-\WW(x)\right)\frac{z}{r}\p(X_{r}\in \md z)+\delta \WW(x) ,
\end{multline}
\begin{multline}\label{E:L1}
\e_{x}\left[ \me^{-q\nu _{0}^{-}}\e_{Y_{\nu_{0}^{-}}}\left[ \me^{-q\tau _{0}^{+}}\ind_{\left\{\tau _{0}^{+}\leq r\right\}
}\right]\ind_{\left\{\nu _{0}^{-}<\nu _{a}^{+} \right\} }\right]\\ =\int_{0}^{\infty }\me^{-qr}\left(w^{\left(q\right) }\left( x;-z\right) -\frac{\mathbb{W}^{\left( q\right) }\left( x\right) }{\mathbb{W}^{\left(
q\right)}\left(a\right) }w^{\left(q\right) }\left(a;-z\right)\right) \frac{z}{r}\p\left( X_{r}\in \md z\right)
\end{multline}
and
\begin{multline}\label{E:L2}
\e_{x}\left[\me^{-q\nu _{0}^{-}}\p_{Y_{\nu_{0}^{-}}}(\tau _{0}^{+}\leq r)\ind_{\left\{ \nu _{0}^{-}<\nu _{a}^{+} \right\} }\right]\\ =\int_{0}^{\infty }\left(\mathcal{W}_{x,\delta}^{\left(q,-q\right) }\left( x+z\right) -\frac{\mathbb{W}^{\left( q\right) }\left( x\right) }{\mathbb{W}^{\left(
q\right)}\left(a\right) }\mathcal{W}_{a,\delta}^{\left(q,-q\right) }\left(a+z\right) \right) \frac{z}{r}\p\left(X_{r}\in\md z\right).
\end{multline}
\end{lemma}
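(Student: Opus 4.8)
The plan is to prove all three identities with one recipe: condition on the value $Y_{\nu_0^-}\le 0$ that $Y$ takes at its first passage below the critical level, rewrite the probability of climbing back up to $0$ within the grace period as an integral against $\frac{z}{r}\,\p(X_r\in\md z)$, and then use Fubini together with the exit identities of Section~2. The first ingredient I would establish is the discounted generalisation of \eqref{eq:lemmapart5}: for $q\ge 0$ and $v\le 0$,
\[
\e_v\!\left[\me^{-q\tau_0^+}\ind_{\{\tau_0^+\le r\}}\right]=\me^{-qr}\int_0^\infty W^{(q)}(v+z)\,\frac{z}{r}\,\p(X_r\in\md z)
\]
(for $q=0$ this is exactly \eqref{eq:lemmapart5}). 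Taking the Laplace transform $\int_0^\infty\theta\me^{-\theta r}(\cdot)\,\md r$ in $r$, the left-hand side becomes $\e_v[\me^{-(q+\theta)\tau_0^+}\ind_{\{\tau_0^+<\infty\}}]=\me^{\Phi(q+\theta)v}$ since $X$ has no positive jumps, while the factor $\me^{-qr}$ turns the right-hand side into $\theta$ times the left side of \eqref{eq:lemmapart2} evaluated at $q+\theta$ in place of $\theta$ and $|v|$ in place of $y$, that is $\theta\,\me^{-\Phi(q+\theta)|v|}/((q+\theta)-q)=\me^{\Phi(q+\theta)v}$. The two transforms coincide, so uniqueness gives the claim.

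The second ingredient is a spatially shifted, ``equal-parameter'' form of \eqref{eq:exp_scale}: for $0\le x\le a$, $z\ge 0$ and $p,q\ge 0$,
\[
\e_x\!\left[\me^{-q\nu_0^-}W^{(p)}(Y_{\nu_0^-}+z)\ind_{\{\nu_0^-<\nu_a^+\}}\right]=F_p(x;z)-\frac{\WW^{(q)}(x)}{\WW^{(q)}(a)}\,F_p(a;z),
\]
where $F_p(x;z)=W^{(p)}(x+z)+\int_0^x\bigl((q-p)W^{(p)}(x+z-y)+\delta W^{(p)\prime}(x+z-y)\bigr)\WW^{(q)}(y)\,\md y$. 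I would get this by applying \eqref{eq:exp_scale} to the translate $\widetilde Y=Y+z$, which has the same Laplace exponent as $Y$, hence the same $W^{(p)}$, $\WW^{(q)}$ and the same $\delta$: the first passage of $Y$ below $0$ equals that of $\widetilde Y$ below $z$, $\nu_a^+(Y)=\nu_{a+z}^+(\widetilde Y)$, and $W^{(p)}(Y_{\nu_0^-}+z)=W^{(p)}(\widetilde Y_{\nu_z^-})$; feeding the data $(z,x+z,a+z)$ into \eqref{eq:exp_scale}, with the roles of its two indices swapped so that the discount rate is $q$ and the scale order is $p$, and substituting $y\mapsto x-y$ in the convolution produces the display. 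Comparing $F_q$ with \eqref{small w} and $F_0$ with the second line of \eqref{eq:convsnlpU} identifies $F_q(x;z)=w^{(q)}(x;-z)$ and $F_0(x;z)=\mathcal W^{(q,-q)}_{x,\delta}(x+z)$ for $x\ge 0$; for $x<0$ the identity is trivial, since then $\nu_0^-=0$ and both sides equal $W^{(p)}(x+z)$.

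The three identities then drop out. For \eqref{E:L1}: $\e_{Y_{\nu_0^-}}[\me^{-q\tau_0^+}\ind_{\{\tau_0^+\le r\}}]$ is a function of $Y_{\nu_0^-}\le 0$, so the first ingredient (with this $q$) and Fubini give the left side of \eqref{E:L1} as $\me^{-qr}\int_0^\infty\e_x[\me^{-q\nu_0^-}W^{(q)}(Y_{\nu_0^-}+z)\ind_{\{\nu_0^-<\nu_a^+\}}]\,\frac{z}{r}\,\p(X_r\in\md z)$, and the second ingredient with $p=q$ replaces the inner expectation by $w^{(q)}(x;-z)-\frac{\WW^{(q)}(x)}{\WW^{(q)}(a)}w^{(q)}(a;-z)$. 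Identity \eqref{E:L2} is the same computation using \eqref{eq:lemmapart5} directly (no factor $\me^{-qr}$) and the second ingredient with $p=0$, which gives $\mathcal W^{(q,-q)}_{x,\delta}(x+z)-\frac{\WW^{(q)}(x)}{\WW^{(q)}(a)}\mathcal W^{(q,-q)}_{a,\delta}(a+z)$. Finally, \eqref{E:L3} is \eqref{E:L2} at $q=0$ after letting $a\to\infty$: by dominated convergence the left side tends to $\e_x[\p_{Y_{\nu_0^-}}(\tau_0^+\le r)\ind_{\{\nu_0^-<\infty\}}]$, while on the right $\WW(a)^{-1}w(a;-z)\to 1-\delta W(z)$ and \eqref{eq:lemmapart3} at $q=0$ gives $\int_0^\infty W(z)\frac{z}{r}\p(X_r\in\md z)=1$, so the limiting right side is exactly $\int_0^\infty\bigl(w(x;-z)-\WW(x)\bigr)\frac{z}{r}\p(X_r\in\md z)+\delta\WW(x)$.

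The step I expect to require the most care is the second ingredient: one must keep the translation straight through \eqref{eq:exp_scale} and then recognise the resulting convolutions of $W^{(p)}$ and $\WW^{(q)}$ as the compact symbols $w^{(q)}$ and $\mathcal W^{(q,-q)}_{\cdot,\delta}$. The other delicate point is the limit $a\to\infty$ in \eqref{E:L3}: the ratio $w(a;-z)/\WW(a)$ must be shown to converge (uniformly enough in $z$ to pass the limit inside the integral), which is transparent under the net-profit condition $\e[X_1]>\delta$ via $\WW(a)\to(\e[X_1]-\delta)^{-1}$, $W(a)\to\e[X_1]^{-1}$ and the convolution form of $w(\cdot;-z)$; the complementary case is cleanest to reduce to this one by an exponential change of measure.
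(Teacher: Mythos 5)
Your proposal is correct and follows essentially the same route as the paper: the Laplace-inverted form of \eqref{eq:lemmapart2}, Tonelli, spatial homogeneity to reduce everything to \eqref{eq:exp_scale}, and the limit $w(a;-z)/\WW(a)\to 1-\delta W(z)$ combined with \eqref{eq:lemmapart3} for the last identity. The only cosmetic difference is that you obtain \eqref{E:L3} as the $a\to\infty$ limit of \eqref{E:L2} at $q=0$, whereas the paper takes the iterated limit $q\to 0$, $a\to\infty$ of \eqref{E:L1}; since the two identities coincide at $q=0$, this is the same computation.
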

%%%%%%%%%%%%%%
\begin{proof}
By \eqref{eq:lemmapart2} and Laplace inversion, we obtain, for all $y\leq 0,$
$$\e_{y}\left[ \me^{-q\tau _{0}^{+}}\ind_{\left\{ \tau _{0}^{+}\leq r\right\}
}\right] =\int_{0}^{\infty }\me^{-qr}W^{\left( q\right) }\left( y+z\right) 
\frac{z}{r}\p(X_{r}\in \md z).$$
Then, by Tonelli's theorem
\begin{align*}
&\e_{x}\left[ \me^{-q\nu _{0}^{-}}\e_{Y_{\nu _{0}^{-}}}\left[ \me^{-q\tau
_{0}^{+}}\ind_{\left\{ \tau _{0}^{+}\leq r\right\} }\right] \ind_{\left\{
\nu _{0}^{-}<\nu _{a}^{+}\right\} }\right] \\ & \qquad=\e_{x}\left[ \me^{-q\nu _{0}^{-}}\int_{0}^{\infty }\me^{-qr}W^{\left(
q\right) }\left( Y_{\nu _{0}^{-}}+z\right) \frac{z}{r}\p(X_{r}\in \md z)\ind
_{\left\{ \nu _{0}^{-}<\nu _{a}^{+}\right\} }\right] \\ & 
\qquad=\int_{0}^{\infty }\me^{-qr}\e_{x}\left[ \me^{-q\nu _{0}^{-}}W^{\left(
q\right) }\left( Y_{\nu _{0}^{-}}+z\right) \ind_{\left\{ \nu _{0}^{-}<\nu
_{a}^{+}\right\} }\right] \frac{z}{r}\p(X_{r}\in \md z)\\ & 
\qquad=\int_{0}^{\infty }\me^{-qr}\e_{x+z}\left[ \me^{-q\nu _{z}^{-}}W^{\left(
q\right) }\left( Y_{\nu _{z}^{-}}\right) \ind_{\left\{ \nu _{z}^{-}<\nu_{a+z}^{+}\right\} }\right]\frac{z}{r}\p(X_{r}\in \md z),
\end{align*}
where the last line follows by spatial homogeneity of $Y$. Using identity \eqref{eq:exp_scale} for $p=q$, we have 
$$\e_{x+z}\left[ \me^{-q\nu _{z}^{-}}W^{\left(
q\right) }\left( Y_{\nu _{z}^{-}}\right) \ind_{\left\{ \nu _{z}^{-}<\nu_{a+z}^{+}\right\} }\right] = w^{\left( q\right) }\left( x;-z\right) -
\frac{\mathbb{W}^{\left( q\right) }\left( x\right) }{\mathbb{W}^{\left(
q\right) }\left( a\right) }w^{\left( q\right) }\left( a;-z\right),
$$
which proves~\eqref{E:L1}.

By \eqref{eq:lemmapart5}, Tonelli's theorem and spatial homogeneity of $Y$, we have 
\begin{align*}
&\e_{x}\left[\me^{-q\nu _{0}^{-}}\p_{Y_{\nu_{0}^{-}}}(\tau _{0}^{+}\leq r)\ind_{\left\{ \nu _{0}^{-}<\nu _{a}^{+} \right\} }\right]=\e_{x}\left[ \me^{-q\nu _{0}^{-}}\int_{0}^{\infty }W\left( Y_{\nu _{0}^{-}}+z\right) \frac{z}{r}\p(X_{r}\in \md z)\ind_{\left\{ \nu _{0}^{-}<\nu _{a}^{+}\right\} }\right]  \\ & 
\qquad=\int_{0}^{\infty }\e_{x}\left[ \me^{-q\nu _{0}^{-}}W\left( Y_{\nu _{0}^{-}}+z\right) \ind_{\left\{ \nu _{0}^{-}<\nu
_{a}^{+}\right\} }\right] \frac{z}{r}\p(X_{r}\in \md z) \\ & \qquad=\int_{0}^{\infty }\e_{x+z}\left[ \me^{-q\nu _{z}^{-}}W\left( Y_{\nu _{z}^{-}}\right) \ind_{\left\{ \nu _{z}^{-}<\nu_{a+z}^{+}\right\} }\right]\frac{z}{r}\p(X_{r}\in \md z)
\\ & \qquad=\int_{0}^{\infty }\left(\mathcal{W}_{x,\delta}^{\left(q,-q\right) }\left( x+z\right) -\frac{\mathbb{W}^{\left( q\right) }\left( x\right) }{\mathbb{W}^{\left(
q\right)}\left(a\right) }\mathcal{W}_{a,\delta}^{\left(q,-q\right) }\left(a+z\right) \right) \frac{z}{r}\p\left(X_{r}\in\md z\right).
\end{align*}

To prove the last identity, we need to compute the following limit 
\begin{align*}
\e_{x}\left[\p_{Y_{\nu^{-}_{0}}}\left(\tau^{+}_{0}\leq r\right)\ind_{\left\lbrace \nu^{-}_{0}<\infty\right\rbrace}\right]=\lim_{q\rightarrow0 }\lim_{a\rightarrow \infty }\e_{x}\left[ \me^{-q\nu _{0}^{-}}\e_{Y_{\nu_{0}^{-}}}\left[ \me^{-q\tau _{0}^{+}}\ind_{\left\{ \tau _{0}^{+}\leq r\right\}}\right]\ind_{\left\{ \nu _{0}^{-}<\nu _{a}^{+} \right\} }\right].
\end{align*}
Since 
\begin{equation}\nonumber
\lim_{a\rightarrow \infty }\frac{W^{\left( q\right) }\left( z+a\right) }{
\WW^{\left( q\right) }\left(a\right) }=0\quad \textrm{and} \quad \lim_{a\rightarrow \infty }\frac{\WW^{\left( q\right)}\left(
a-y\right) }{\WW^{\left( q\right) }\left(a\right) }=\me^{-\varphi \left( q\right)y}.
\end{equation} 
We obtain using Lebesgue's dominated convergence theorem
\begin{align*}
&\lim_{a\rightarrow \infty }\frac{w^{\left( q\right) }\left(a;-z\right) }{
\WW^{\left( q\right) }\left(a\right) }=\delta \int_{0}^{\infty}\me^{-\varphi \left( q\right)y}W^{\left( q\right) \prime}\left( y+z\right) \md y\\ &
\qquad=-\delta W^{\left( q\right) }\left( z\right)+ \delta \me^{\varphi \left( q\right)z}\left( \frac{1}{\delta}-\varphi \left( q\right)\int_{0}^{z}\me^{-\varphi \left( q\right)y}W^{\left( q\right) }\left( y\right) \md y \right),
\end{align*}
since $\psi \left( \varphi \left( q\right) \right) -q=\psi \left( \varphi\left( q\right) \right) -\delta \varphi \left( q\right) +\delta \varphi\left( q\right) -q=\delta \varphi \left( q\right).$
 Then 
\begin{align*}
&\lim_{q\rightarrow0 }\lim_{a\rightarrow \infty }\frac{w^{\left( q\right) }\left(a,-z\right) }{W^{\left( q\right) }\left(a\right) }=-\delta W(z)+ 1
\end{align*}
and the result follows.
\end{proof}

%%%%%%%%%%%%%%%%%%%%%%%%%%%%%%%%%%%%
\subsection{Proof of Theorem~\ref{refractedTh}}

For $x<0$, using the strong Markov property of $U$ and the fact that it is skip-free upward, we have
%$U_{\kappa^{+}_{0}}=0$ on $\left\lbrace \kappa^{+}_{0}<\infty \right\rbrace$, we have
$$
\p_{x}\left(\kappa^{U}_{r}=\infty\right) = \e_{x} \left[\p_{x} \left(\kappa^{U}_{r}=\infty \mid \mathcal{F}_{\kappa^{+}_{0}} \right) \ind_{\left\lbrace \kappa^{+}_{0}<\infty \right\rbrace} \right] = \p_{x} \left( \kappa^{+}_{0}\leq r \right) \p \left( \kappa^{U}_{r}=\infty \right) .
$$
Since $\left\lbrace X_{t} , t<\tau^{+}_{0} \right\rbrace$ and $\left\lbrace U_{t} , t<\kappa^{+}_{0} \right\rbrace$ have the same distribution with respect to $\p_{x}$ when $x<0$, we further have 
\begin{align}\label{E:xNeg}
\p_{x} \left( \kappa^{U}_{r}=\infty \right) = \p_{x} \left( \tau^{+}_{0}\leq r)\p(\kappa^{U}_{r}=\infty \right) .
\end{align}

For $x\geq0$, using the strong Markov property of $U$ again, the fact that $\left\lbrace Y_{t} , t<\nu^{-}_{0} \right\rbrace$ and $\left\lbrace U_{t} , t<\kappa^{-}_{0} \right\rbrace$ have the same distribution with respect to $\p_{x}$ and using~\eqref{E:xNeg}, we get
\begin{align}
\p_{x}\left(\kappa^{U}_{r}=\infty\right) &= \p_x\left(\kappa_{0}^{-}=\infty\right) + \e_{x} \left[\p_{x} \left(\kappa^{U}_{r}=\infty \mid \mathcal{F}_{\kappa^-_{0}} \right) \ind_{\left\lbrace \kappa^-_{0}<\infty \right\rbrace} \right]  \nonumber\\
&= \p_x\left(\kappa_{0}^{-}=\infty\right) + \e_{x} \left[\p_{U_{\kappa^{-}_{0}}}\left(\kappa^{U}_{r}=\infty\right)\ind_{\left\lbrace\kappa^{-}_{0}<\infty\right\rbrace} \right] \nonumber\\
&= \p_x \left( \nu^{-}_{0}=\infty \right) + \p \left( \kappa^{U}_{r}=\infty \right) \e_{x} \left[ \p_{Y_{\nu^{-}_{0}}} \left(\tau^{+}_{0}\leq r\right) \ind_{\left\lbrace\nu^{-}_{0}<\infty\right\rbrace} \right] .\label{eq:main_decomp}
\end{align}
Note that this last expression holds for all $x\in \reals$.

We will first prove the result for $x=0$. We split this part of the proof into two cases: for processes with paths of bounded variation (BV), and then for processes with paths of unbounded variation (UBV).

First, we assume $X$ and $Y$ have paths of BV. Setting $x=0$ in~\eqref{eq:main_decomp} yields
$$
\p \left(\kappa^{U}_{r}=\infty\right) = \p \left( \nu^{-}_{0}=\infty \right) + \p \left( \kappa^{U}_{r}=\infty \right) \e \left[ \p_{Y_{\nu^{-}_{0}}} \left(\tau^{+}_{0}\leq r\right) \ind_{\left\lbrace\nu^{-}_{0}<\infty\right\rbrace} \right] .
$$
Solving for $\p \left(\kappa^{U}_{r}=\infty\right)$ and using both~\eqref{E:classicalruinprobaY} and~\eqref{E:L3}, we get
\begin{equation}\label{E:Paris00}
\p(\kappa_{r}^{U}=\infty) =\frac{\left(\e[X_{1}]-\delta\right)_{+}}{\int^{\infty}_{0}\frac{z}{r}\p(X_{r}\in \md z)-\delta} ,
\end{equation}
where we used the fact that $\WW(0)>0$.

Now, if $X$ has paths of UBV, we will use the same approximation procedure as in \cites{loeffenetal2013}. We denote by $\kappa^{U}_{r,\epsilon}$ the stopping time describing the first time an excursion, starting when $U$ gets below $0$ and ending when $U$ gets back up to $\epsilon$, lasts longer than $r$. More precisely, for $\epsilon>0$, define
\begin{equation*}
\kappa^{U}_{r,\epsilon} = \inf \left\lbrace t>r: t-g^{U}_{t,\epsilon}>r, U_{t-r}< 0 \right\rbrace,
\end{equation*}
where $g^{U}_{t,\epsilon} = \sup \left\lbrace 0\leq s\leq t \colon U_{s} \geq \epsilon \right\rbrace$. Clearly, we have $\kappa^{U}_{r,\epsilon}<\kappa^{U}_r$ a.s.\ which implies that $\left\lbrace \kappa^{U}_{r,\epsilon} = \infty \right\rbrace \subseteq \left\lbrace \kappa^{U}_r = \infty \right\rbrace$. Then, it can be shown that $\lim_{\epsilon \to 0} \p_{\epsilon} \left( \kappa^{U}_{r,\epsilon}=\infty\right) = \p \left(\kappa^{U}_{r} = \infty\right)$.

Using similar arguments as in the BV case, when $x<0$, we have
$$
\p_{x}\left(\kappa^{U}_{r,\epsilon}=\infty\right) = \p_{x}(\kappa^{+}_{\epsilon}\leq r) \p_{\epsilon}(\kappa^{U}_{r,\epsilon}=\infty)
$$
and then, when $x\geq0$, we have
$$
\p_{x}\left(\kappa^{U}_{r,\epsilon}=\infty\right) = \p_{x}\left(\nu^{-}_{0}=\infty\right) + \p_{\epsilon}(\kappa^{U}_{r,\epsilon}=\infty) \e_{x} \left[\p_{Y_{\nu^{-}_{0}}}\left(\kappa^{+}_{\epsilon} \leq r \right) \ind_{\left\lbrace\nu^{-}_{0}<\infty\right\rbrace} \right] .
$$
%\end{equation}
Setting $x=\epsilon$ and solving for $\p_{\epsilon}\left(\kappa^{U}_{r,\epsilon}=\infty\right)$, we get with the help of~\eqref{E:classicalruinprobaY}
\begin{equation}\label{UVeps}
\p_{\epsilon}\left(\kappa^{U}_{r,\epsilon}=\infty\right) = \frac{\left( \e \left[ X_1\right]-\delta \right)_{+} \WW(\epsilon)}{1-\e_{\epsilon}\left[\p_{Y_{\nu^{-}_{0}}}\left(\kappa^{+}_{\epsilon}\leq r\right) \ind_{\left\lbrace\nu^{-}_{0}<\infty\right\rbrace} \right]} .
\end{equation}

Using~\eqref{eq:first-passage_above} and then~\eqref{laplruinovershoot}, we can write
\begin{align*} 
\int^{\infty}_{0}\me^{-\theta r}\e_{\epsilon} & \left[\p_{Y_{\nu^{-}_{0}}}\left(\kappa^{+}_{\epsilon}\leq r\right)\ind_{\left\lbrace \nu^{-}_{0}<\infty\right\rbrace}\right]\md r \\
&=\e_{\epsilon}\left[\ind_{\left\lbrace \nu^{-}_{0}<\infty\right\rbrace}\int^{\infty}_{0}\me^{-\theta r}\p_{Y_{\nu^{-}_{0}}}(\kappa^{+}_{\epsilon}\leq r)\md r\right]\\
&= \frac{1}{\theta} \e_{\epsilon} \left[\ind_{\left\lbrace \nu^{-}_{0}<\infty\right\rbrace} \e_{Y_{\nu^{-}_{0}}}\left[\mathrm{e}^{-\theta \kappa^{+}_{\epsilon}} \ind_{\{\kappa^{+}_{\epsilon}<\infty\}} \right] \right]\\
&= \frac{\e_{\epsilon} \left[\ind_{\left\lbrace \nu^{-}_{0}<\infty \right\rbrace} \mathrm{e}^{\Phi(\theta) Y_{\nu^-_0}}\right]}{\theta \left( \mathrm{e}^{\Phi(\theta) \epsilon} + \delta \Phi(\theta) \int_0^\epsilon \mathrm{e}^{\Phi(\theta)y} \mathbb{W}^{(\theta)}(\epsilon-y) \mathrm{d}y \right)} \\
&=\frac{1-(\theta-\delta\Phi(\theta)) \int^{\epsilon}_{0} \me^{-\Phi(\theta) y}\WW(y)\md y - \frac{\theta -\delta \Phi(\theta)}{\Phi(\theta)} \me^{-\Phi(\theta)\epsilon} \WW(\epsilon)}{\theta \left(1+\delta \Phi(\theta) \int_0^{\epsilon} \me^{-\Phi(\theta)y} \WW^{(\theta)}(y) \mathrm{d}y \right)} .
\end{align*}
Consequently, we have
\begin{align*}
\int^{\infty}_{0}\me^{-\theta r} & \left( \frac{1-\e_{\epsilon}\left[\p_{Y_{\nu^{-}_{0}}}\left(\kappa^{+}_{\epsilon}\leq r\right)\ind_{\left\lbrace \nu^{-}_{0}<\infty\right\rbrace}\right]}{\WW(\epsilon)} \right) \md r \\
&= \frac{1}{\theta \WW(\epsilon)} - \frac{1-(\theta-\delta\Phi(\theta)) \int^{\epsilon}_{0} \me^{-\Phi(\theta) y}\WW(y)\md y - \frac{\theta -\delta \Phi(\theta)}{\Phi(\theta)} \me^{-\Phi(\theta)\epsilon} \WW(\epsilon)}{\theta \WW(\epsilon) \left(1+\delta \Phi(\theta) \int_0^{\epsilon} \me^{-\Phi(\theta)y} \WW^{(\theta)}(y) \mathrm{d}y \right)} \\
&= \frac{1}{\theta \WW(\epsilon)} \left( \frac{\delta \Phi(\theta) \int_0^{\epsilon} \me^{-\Phi(\theta)y} \WW^{(\theta)}(y) \mathrm{d}y + (\theta-\delta\Phi(\theta)) \int^{\epsilon}_{0} \me^{-\Phi(\theta) y}\WW(y)\md y + \frac{\theta -\delta \Phi(\theta)}{\Phi(\theta)} \me^{-\Phi(\theta)\epsilon} \WW(\epsilon)}{1+\delta \Phi(\theta) \int_0^{\epsilon} \me^{-\Phi(\theta)y} \WW^{(\theta)}(y) \mathrm{d}y} \right) \\
& \xrightarrow[\epsilon \to 0]{} \frac{1}{\Phi(\theta)}-\frac{\delta}{\theta} ,
\end{align*} 
where we used the fact that, for all $\theta \geq 0$, we have
\begin{equation*}
\lim_{\epsilon\rightarrow0} \frac{\int_0^{\epsilon} \me^{-\Phi(\theta) y} \WW^{(\theta)}(y) \mathrm{d}y}{\WW(\epsilon)} = 0 .
\end{equation*} 
From~\eqref{eq:lemmapart1} in Lemma~\ref{laplace}, we have that $\theta \mapsto 1/\Phi(\theta)-\delta/\theta$ is the Laplace transform of
$$
r \mapsto \int_{0}^\infty \frac{z}{r} \p \left( X_r \in \mathrm{d}z \right) - \delta .
$$
By the Extended continuity theorem of Laplace transforms (see e.g.\ \cite{fellervol2}), this concludes the proof for $x=0$.

We now prove the result for $x \in \mathbb{R}$. Now, $X$ and $Y$ can be of BV or of UBV. We can now write~\eqref{eq:main_decomp} as follows:
\begin{align*}
\p_{x}(\kappa_{r}^{U}=\infty) &= \left(\e\left[X_{1}\right]-\delta\right)_{+} \WW(x) + \frac{(\e[X_{1}]-\delta)_{+}}{\int^{\infty}_{0}\frac{z}{r}\p(X_{r}\in \md z)-\delta} \e_{x}\left[\p_{Y_{\nu^{-}_{0}}}(\tau^{+}_{0}\leq r)\ind_{\left\lbrace\nu^{-}_{0}<\infty\right\rbrace}\right] \\
&= \left(\e\left[ X_{1}\right]-\delta\right)_{+}\left(\frac{\WW(x) \left(\int^{\infty}_{0}\frac{z}{r}\p(X_{r}\in \md z)-\delta \right)+\e_{x} \left[ \p_{Y_{\nu^{-}_{0}}}\left(\tau^{+}_{0}\leq r \right) \ind_{\left\lbrace \nu^{-}_{0}<\infty \right\rbrace} \right]}{\int^{\infty}_{0} \frac{z}{r} \p(X_{r}\in \md z) - \delta} \right) .
\end{align*}
Using~\eqref{E:L3}, we get finally
%Using~\eqref{E:L3} and~\eqref{eq:lemmapart4}, we get
$$
\p_{x}\left(\kappa_{r}^{U}=\infty \right) = \left(\e[X_{1}]-\delta\right)_{+}\left(\frac{\int^{\infty}_{0}w(x;-z)z\p(X_{r}\in \md z)}{\int^{\infty}_{0}z\p(X_{r}\in \md z)-\delta r} \right) ,
$$
which holds for all $x\in \reals$. \begin{flushright} $\blacksquare$ \end{flushright}

%%%%%%%%%%%%%%%%%%%%%%%%%%%%%%%%%%
\subsection{Proof of Theorem~\ref{refracted}}

For $x<0$, using the strong Markov property of $U$ and the fact that it is skip-free upward, we have
%For $x<0$, using the strong Markov property of $U$ and the fact that $U_{\kappa^{+}_{0}}=0$ on $\left\lbrace \kappa^{+}_{0}<\infty \right\rbrace$ we have
\begin{eqnarray*}
\e_{x}\left[ \me^{-q\kappa _{r}^{U}}\ind_{\left\{ \kappa
_{r}^{U}<\kappa _{a}^{+}\right\} }\right]=\me^{-qr}\p_{x}(\kappa _{0}^{+}>r)+\e_{x}\left[ \me^{-q\kappa_{0}^{+}}\ind_{\left\{ \kappa _{0}^{+}\leq r\right\} }\right] \e\left[
\me^{-q\kappa _{r}^{U}}\ind_{\left\{ \kappa _{r}^{U}<\kappa_{a}^{+}\right\} }\right].
\end{eqnarray*}
Since$\left\lbrace X_{t},\text{ }t<\tau^{+}_{0}\right\rbrace$ and $\left\lbrace U_{t},\text{ }t<\kappa^{+}_{0}\right\rbrace$ have the same law under $\p_{x}$ when $x<0$, we obtain
\begin{align}\label{E:xNegb}
\e_{x}\left[ \me^{-q\kappa _{r}^{U}}\ind_{\left\{ \kappa
_{r}^{U}<\kappa _{a}^{+}\right\} }\right]=\me^{-qr}\p_{x}(\tau _{0}^{+}>r)+\e_{x}\left[ 
\me^{-q\tau _{0}^{+}}\ind_{\left\{ \tau _{0}^{+}\leq r\right\} }\right]
\e\left[ \me^{-q\kappa _{r}^{U}}\ind_{\left\{ \kappa _{r}^{U}<\kappa_{a}^{+}\right\} }\right] .
\end{align}
For $0\leq x\leq a$, using the strong Markov property again, we get
\begin{eqnarray*}
\e_{x}\left[ \me^{-q\kappa _{r}^{U}}\ind_{\left\{ \kappa
_{r}^{U}<\kappa _{a}^{+}\right\} }\right]=
\e_{x}\left[ \me^{-q\kappa_{0}^{-}}\e_{U_{\kappa _{0}^{-}}}\left[ \me^{-q\kappa _{r}^{U}}\ind_{\left\{ \kappa_{r}^{U}<\kappa _{a}^{+}\right\} }\right]\ind_{\left\{ \kappa _{0}^{-}< \kappa _{a}^{+}\right\} } \right] 
\end{eqnarray*}
Using the fact that $\left\lbrace Y_{t},\text{ }t<\nu^{-}_{0}\right\rbrace$ and $\left\lbrace U_{t},\text{ }t<\kappa^{-}_{0}\right\rbrace$ have the same law under $\p_{x}$ when $x\geq 0$ and injecting~\eqref{E:xNegb} in the last expectation, we have, for all $x\in\reals$
\begin{align*}
&\e_{x}\left[ \me^{-q\kappa _{r}^{U}}\ind_{\left\{ \kappa
_{r}^{U}<\kappa _{a}^{+}\right\} }\right]
=\me^{-qr}\e_{x}\left[ \me^{-q\nu _{0}^{-}}\ind_{\left\{ \nu_{0}^{-}<\nu_{a}^{+} \right\} }\right] -\me^{-qr}\e_{x}\left[\me^{-q\nu _{0}^{-}}\p_{Y_{\nu_{0}^{-}}}(\tau _{0}^{+}\leq r)\ind_{\left\{ \nu _{0}^{-}<\nu _{a}^{+} \right\} }\right]\\
& \qquad+\e\left[ \me^{-q\kappa _{r}^{U}}\ind_{\left\{ \kappa
_{r}^{U}<\kappa _{a}^{+}\right\} }\right] \e_{x}\left[ \me^{-q\nu _{0}^{-}}\e_{Y_{\nu
_{0}^{-}}}\left[ \me^{-q\tau _{0}^{+}}\ind_{\left\{ \tau _{0}^{+}\leq r\right\}
}\right]\ind_{\left\{ \nu _{0}^{-}<\nu _{a}^{+} \right\} }\right] .
\end{align*}
For $x=0$ and using the last equation
\begin{equation*}
\e\left[ \me^{-q\kappa _{r}^{U}}\ind_{\left\{ \kappa_{r}^{U}<\kappa _{a}^{+}\right\} }\right] =\frac{\me^{-qr}\e\left[ \me^{-q\nu _{0}^{-}}\ind_{\left\{ \nu _{0}^{-}<\nu _{a}^{+} \right\}}
\right]-\me^{-qr}\e\left[\me^{-q\nu _{0}^{-}}\p_{Y_{\nu_{0}^{-}}}(\tau _{0}^{+}\leq r)\ind_{\left\{ \nu _{0}^{-}<\nu _{a}^{+} \right\} }\right] }{1-\e\left[ \me^{-q\nu _{0}^{-}}\e_{Y_{\nu_{0}^{-}}}\left[ \me^{-q\tau _{0}^{+}}\ind_{\left\{ \tau _{0}^{+}\leq r\right\}
}\right]\ind_{\left\{ \nu _{0}^{-}<\nu _{a}^{+} \right\} }\right] }
\end{equation*}
where, from~\eqref{E:L1}, 
\begin{align*}
&\e\left[ \me^{-q\nu _{0}^{-}}\e_{Y_{\nu_{0}^{-}}}\left[ \me^{-q\tau _{0}^{+}}\ind_{\left\{ \tau _{0}^{+}\leq r\right\}
}\right]\ind_{\left\{ \nu _{0}^{-}<\nu _{a}^{+} \right\} }\right]\\
& \qquad=\int_{0}^{\infty }\me^{-qr}\left(W^{\left(q\right) }\left(z\right) -\frac{\mathbb{W}^{\left( q\right) }\left( 0\right) }{\mathbb{W}^{\left(
q\right)}\left(a\right) }w^{\left(q\right) }\left(a;-z\right)\right) \frac{z}{r}\p\left( X_{r}\in \md z\right)
\end{align*}
and, from~\eqref{E:L2},
\begin{align*}
&\e\left[\me^{-q\nu _{0}^{-}}\p_{Y_{\nu_{0}^{-}}}(\tau _{0}^{+}\leq r)\ind_{\left\{ \nu _{0}^{-}<\nu _{a}^{+} \right\} }\right]\\
& \qquad=\int_{0}^{\infty }\left(W\left(z\right)-\frac{\mathbb{W}^{\left( q\right) }\left(0\right) }{\mathbb{W}^{\left(
q\right)}\left(a\right) }\mathcal{W}_{a,\delta}^{\left(q,-q\right) }\left(a+z\right) \right) \frac{z}{r}\p\left( X_{r}\in \md z\right).
\end{align*}

With the help of~\eqref{eq:lemmapart3}, of~\eqref{eq:lemmapart4} with $y=0$, and since $\WW(0)>0$, we obtain
\begin{align}\nonumber
\e\left[ \me^{-q\kappa _{r}^{U}}\ind_{\left\{ \kappa
_{r}^{U}<\kappa _{a}^{+}\right\} }\right] =\frac{-\me^{-qr}\frac{\mathbb{W}^{\left( q\right) }\left( 0\right) }{\mathbb{W}^{\left(
q\right)}\left(a\right) }\mathbb{Z}^{\left(
q\right)}\left(a\right)
+\me^{-qr}\int_{0}^{\infty }\frac{\mathbb{W}^{\left( q\right) }\left(0\right) }{\mathbb{W}^{\left(q\right)}\left(a\right) }\mathcal{W}_{a,\delta}^{\left(q,-q\right) }\left(a+z\right) \frac{z}{r}\p\left( X_{r}\in \md z\right)
}{\frac{\mathbb{W}^{\left( q\right) }\left( 0\right) }{\mathbb{W}^{\left(
q\right)}\left(a\right) }\int_{0}^{\infty }\me^{-qr}w^{\left(q\right) }\left(a;-z\right)\frac{z}{r}\p\left( X_{r}\in \md z\right)
} \\ \label{zeropar}
=1-\frac{\mathbb{Z}^{\left(
q\right)}\left(a\right)
+\int_{0}^{\infty }\left(w^{\left(q\right) }\left(a;-z\right)
-\mathcal{W}_{a,\delta}^{\left(q,-q\right) }\left(a+z\right) \right)\frac{z}{r}\p\left( X_{r}\in \md z\right)
}
{
\int_{0}^{\infty }w^{\left(q\right) }\left(a;-z\right)\frac{z}{r}\p\left( X_{r}\in \md z\right)
} .
\end{align}

Then 
\begin{align*}
&\me^{qr}\e_{x}\left[ \me^{-q\kappa _{r}^{U}}\ind_{\left\lbrace \kappa_{r}^{U}<\kappa _{a}^{+}\right\rbrace }\right]=\mathbb{Z}^{\left(
q\right)}\left(x\right)-\mathbb{Z}^{\left(
q\right)}\left(a\right)\frac{\mathbb{W}^{\left( q\right) }\left(x\right) }{\mathbb{W}^{\left(q\right)}\left(a\right) }\\
& \qquad-\int_{0}^{\infty }\left(\mathcal{W}_{x,\delta}^{\left(q,-q\right) }\left( x+z\right) -\frac{\mathbb{W}^{\left( q\right) }\left( x\right) }{\mathbb{W}^{\left(
q\right)}\left(a\right) }\mathcal{W}_{a,\delta}^{\left(q,-q\right) }\left(a+z\right) \right) \frac{z}{r}\p\left( X_{r}\in \md z\right)\\
& \qquad+\e\left[ \me^{-q\kappa _{r}^{U}}\ind_{\left\lbrace \kappa
_{r}^{U}<\kappa _{a}^{+}\right \rbrace }\right] \int_{0}^{\infty }\left(w^{\left(q\right) }\left( x;-z\right) -\frac{\mathbb{W}^{\left( q\right) }\left( x\right) }{\mathbb{W}^{\left(
q\right)}\left(a\right) }w^{\left(q\right) }\left(a;-z\right)\right) \frac{z}{r}\p\left( X_{r}\in \md z\right)
\\
& \qquad=\mathbb{Z}^{\left(q\right)}\left(x\right)+\int_{0}^{\infty}\left( w^{\left(q\right)}\left(x;-z\right)\e\left[\me^{-q\kappa^{U}_{r}}\ind_{\left\lbrace \kappa^{U}_{r}<\kappa_a^{+}\right\rbrace}\right]
-\mathcal{W}_{x,\delta}^{\left( q,-q\right) }\left(x+z\right)
 \right)\frac{z}{r}\p\left( X_{r}\in \md z\right).
\end{align*}

When $X$ has paths of unbounded variation, we can use the same approximation procedure as in the proof of Theorem~\ref{refractedTh}. The details are left to the reader.

Identity (ii) follows from (i) by taking limit. Indeed, we have
\begin{align*}
\lim_{a\rightarrow \infty }& \e_{x}\left[ \mathrm{e}^{-q(\kappa _{r}^{U}-r)}%
\ind_{\left\{ \kappa _{r}^{U}<\kappa _{a}^{+}\right\} }\right]  =\lim_{a\rightarrow
\infty }\e\left[ \me^{-q\kappa _{r}^{U}}\ind_{\left\{ \kappa _{r}^{U}<\kappa
_{a}^{+}\right\} }\right] \int_{0}^{\infty
} w^{\left( q\right) }\left( x;-z\right) \frac{z}{r}\p\left( X_{r}\in \md z\right)\\
& \qquad + \mathbb{Z}^{\left( q\right) }\left( x\right)  -
\int_{0}^{\infty
}\mathcal{W}_{x,\delta}^{\left( q,-q\right)
}\left( x+z\right) \frac{z}{r}\p\left( X_{r}\in \md z\right) ,
\end{align*}
and, from~\eqref{zeropar},
\begin{eqnarray*}
\lim_{a\rightarrow \infty }\e\left[ \me^{-q\kappa _{r}^{U}}\ind_{\left(
\kappa _{r}^{U}<\kappa _{a}^{+}\right) }\right]  &=&\lim_{a\rightarrow
\infty }\frac{-\mathbb{Z}^{\left(
q\right)}\left(a\right)
+\int_{0}^{\infty }\mathcal{W}_{a,\delta}^{\left(q,-q\right) }\left(a+z\right) \frac{z}{r}\p\left( X_{r}\in \md z\right)
}{\int_{0}^{\infty }w^{\left(q\right) }\left(a;-z\right)\frac{z}{r}\p\left( X_{r}\in \md z\right)
} .
\end{eqnarray*}
As shown before, we have
\begin{equation*}
\lim_{a\rightarrow \infty }\dfrac{w^{(q)}(a;-z)}{\WW^{(q)}(a)}=-\delta W^{\left(
q\right) }\left( z\right) +\me^{\varphi \left( q\right) z}\left( 1-\delta
\varphi \left( q\right) \int_{0}^{z}\me^{-\varphi \left( q\right)
y}W^{\left( q\right) }\left( y\right) \md y\right) ,
\end{equation*}
Then
\begin{align*}
&\lim_{a\rightarrow \infty } \int_{0}^{\infty }\dfrac{w^{(q)}(a;-z)}{\WW^{(q)}(a)}\frac{z}{r}\p\left( X_{r}\in \md z\right)\\ & \qquad
=\int_{0}^{\infty }\left( 1-\delta \varphi \left( q\right) \int_{0}^{z}%
\mathrm{e}^{-\varphi \left( q\right) v}W^{\left( q\right) }\left( v\right)
\mathrm{d}v\right) \me^{\varphi \left( q\right) z}\frac{z}{r}\p\left( X_{r}\in \md %
z\right) -\delta \mathrm{e}^{qr}.
\end{align*}
Finally, from the definition of $\mathcal{W}_{a,\delta}^{\left(q,-q\right)} $, using Lebesgue's dominated convergence theorem and performing an integration by parts,
\begin{eqnarray*}
&&\lim_{a\rightarrow \infty } \frac{-\mathbb{Z}^{\left(
q\right)}\left(a\right)
+\int_{0}^{\infty }\mathcal{W}_{a,\delta}^{\left(q,-q\right) }\left(a+z\right) \frac{z}{r}\p\left( X_{r}\in \md z\right)
}{\WW^{(q)}(a)}\\
&=&-\dfrac{q}{\varphi \left( q\right) }+\lim_{a\rightarrow \infty }\int_{0}^{\infty }\left(\dfrac{\mathbb{W}^{\left(
q\right) }\left( a+z\right)-\delta W(z)\mathbb{W}^{\left(
q\right) }\left( a\right)}{\WW^{\left(
q\right)}(a)}\right) \frac{z%
}{r}\p\left( X_{r}\in \md z\right)  \\
&&+\lim_{a\rightarrow \infty }\int_{0}^{\infty }\frac{z}{r}
\p(X_{r}\in \md z)\int_{0}^{z}\left(q W\left( z-y\right)  -\delta W'\left( z-y\right) \right)\frac{\WW^{\left( q\right) }\left( a+y\right)}{\WW^{\left( q\right) }\left( a\right) } \md y
\\
&=&-\dfrac{q}{\varphi \left( q\right) }-\delta+\int_{0}^{\infty } \me^{\varphi \left( q\right) z}\left(1+ \left(q-\delta \varphi \left( q\right) \right)\int_{0}^{z} \me^{-\varphi
\left( q\right) y}W(y)\mathrm{d}y \right)\frac{z}{r}
\p(X_{r}\in \md z) .
\end{eqnarray*}

To prove (iii), we use first the strong Markov property and the fact that $U$ has only downward jumps to get
\begin{equation}\label{twosided1}\nonumber
\p_x(\kappa^U_r=\infty)=\p_x(\kappa_a^+<\kappa^U_r)\p_a(\kappa^U_r=\infty) ,
\end{equation}
which yields
$$
\p_x(\kappa_a^+<\kappa^U_r)=\frac{\p_x(\kappa^U_r=\infty)}{\p_a(\kappa^U_r=\infty)} .
$$
Using the change of measure $\frac{\md\p_{x}^{\Phi(q)}}{\md\p_{x}} =\mathrm{e}^{\Phi(q) (X_{t}-x)-q t}$ on $\mathcal{F}_{t}$ and using~\eqref{refractedPr}, we get
\begin{align*}%\label{parisianscale}
V^{(q)}(x) &:= \mathrm{e}^{\Phi(q)x}\p^{\Phi(q)}_x(\kappa^U_r=\infty) \\
&= \left(\e^{\Phi(q)}\left[ X_1 \right]-\delta\right)_{+}\frac{\int_0^\infty  \mathrm{e}^{-\Phi(q)z} w^{(q)}(x;-z)  z\mathbb{P}^{\Phi(q)}(X_{r}\in\mathrm{d}z) }{ \int_0^\infty   z\mathbb{P}^{\Phi(q)}(X_{r}\in\mathrm{d}z)-\delta r} \\
&= \left(\mathbb \e^{\Phi(q)} \left[X_1\right] -\delta\right)_{+}\frac{\int_0^\infty  w^{(q)}(x;-z)  z\mathbb{P}(X_{r}\in\mathrm{d}z) }{ \int_0^\infty   z\mathbb{P}^{\Phi(q)}(X_{r}\in\mathrm{d}z)-\delta r}.
\end{align*}
Consequently, from the Optional Stopping Theorem and from the fact that, with respect to $\p^{\Phi(q)}$, $X$ and $Y$ drift to infinity (since $\psi_{\Phi(q)}^\prime(0+)=\psi^\prime(\Phi(q)+)>0$), we have
\begin{equation}\label{parisiantwosided}\nonumber
\e_x\left[\mathrm{e}^{-q\kappa^+_a},\kappa_a^+<\kappa^U_r\right] = \frac{V^{(q)}(x)}{V^{(q)}(a)} = \frac{\int_0^\infty  w^{(q)}(x;-z)  z\mathbb{P}(X_{r}\in\mathrm{d}z)}{\int_0^\infty  w^{(q)}(a;-z)  z\mathbb{P}(X_{r}\in\mathrm{d}z)}. 
\end{equation} 
\begin{flushright} $\blacksquare$ \end{flushright}

\appendix
%%%%%%%%%%%%%%%%%%%%%%%%%%%%%%%%%%%%%%%
%%%%%%%%%%%%%%%%%%%%%%%%%%%%%%%%%%%%%%%
\section{A few analytical properties of scale functions}

The $q$-scale function $W^{(q)}$, of a spectrally negative Lévy process $X$, is differentiable except for at most countably many points. Moreover, $W^{(q)}$ is continuously differentiable if $X$ has paths of unbounded variation or if the tail of the L\'evy measure is continuous, and it is twice continuously differentiable on $(0,\infty)$ if $\sigma>0$. The initial values of $W^{(q)}$ and $W^{(q)\prime}$ are given by
\begin{equation*}\label{initialvalues}
\begin{split}
W^{(q)}(0+) &=
\begin{cases}
1/\drift & \text{when $\sigma=0$ and $\int_{0}^1 z \Pi(\mathrm{d}z) < \infty$,} \\
0 & \text{otherwise,}
\end{cases}\\ 
W^{(q)\prime}(0+) &=
\begin{cases}
2/\sigma^2 & \text{when $\sigma>0$,} \\
(\Pi(0,\infty)+q)/c^2 & \text{when $\sigma=0$ and $\Pi(0,\infty)<\infty$,} \\
\infty & \text{otherwise.}
\end{cases}
\end{split}
\end{equation*}
On the other hand, when $\psi'(0+) > 0$, the \textit{terminal value} of $W$ is given by
$$
\lim_{x \to \infty} W(x) = \frac{1}{\psi'(0+)}.
$$
It is also well known that
\begin{equation}\label{limitZ&W}\nonumber
\lim_{x \to \infty} \frac{Z^{(q)}(x)}{W^{(q)}(x)}=\frac{q}{\Phi(q)}.
\end{equation}
%and 
%\begin{equation}\label{limitW}
%\lim_{x \to \infty} \frac{W^{(q)}(x+y)}{W^{(q)}(x)}=\mathrm{e}^{\Phi(q)y} .
%\end{equation}

Finally, recall the following useful identity taken from \cite{renaud2014}: for $p,q \geq 0$ and $x \in \mathbb{R}$,
\begin{multline}\label{E:convolution}
(q-p) \int_0^x \WW^{(p)}(x-y) W^{(q)}(y) \mathrm{d}y \\
= W^{(q)}(x) - \WW^{(p)}(x) + \delta \left( W^{(q)}(0) \mathbb{W}^{(p)}(x) + \int_0^x \WW^{(p)}(x-y) W^{(q) \prime}(y) \mathrm{d}y \right) ,
\end{multline}
where $\WW^{(q)}$ is the $q$-scale function of the spectrally negative Lévy process $Y=\{Y_t=X_t-\delta t, t\geq 0\}$. Note that when $\delta=0$, we recover a special case first obainted in \cite{loeffenetal2014}:
\begin{equation}\label{eq:sym} 
(q-p)\int_0^x\pscale(x-y)\qscale(y)\mathrm{d}y=\qscale(x)-\pscale(x) .
\end{equation}

%%%%%%%%%%%%%%%%%%%%%%%%%%%%%%%%%%%%%%%
%%%%%%%%%%%%%%%%%%%%%%%%%%%%%%%%%%%%%%%
\section{Acknowledgements}

We thank two anonymous referees for their careful reading of the paper.

Funding in support of this work was provided by the Natural Sciences and Engineering Research Council of Canada (NSERC).

Mohamed Amine Lkabous thanks the Institut des sciences math\'ematiques (ISM) and the Faculté des sciences at UQAM for their financial support (PhD scholarships).

Irmina Czarna is supported by National Science Centre Grant No. 2015/19/D/ST1/01182.

\bibliographystyle{alpha}
\bibliography{REFERENCES}
\end{document}